\numberwithin{equation}{section}
\newenvironment{Ack}%
{\par \vspace{\baselineskip}%
 \noindent \textbf{Acknowledgements.}}%
{\par \vspace{\baselineskip}}
\newlist{clist}{enumerate}{1}
\setlist*[clist]{label=(\roman*), nosep}
\crefname{section}{\S\!}{\S\S\!}
\crefname{subsection}{\S\!}{\S\S\!}
\crefname{thm}{Theorem}{Theorems}
\crefname{dfn}{Definition}{Definitions}
\crefname{prp}{Proposition}{Propositions}
\crefname{lem}{Lemma}{Lemmas}
\crefname{cor}{Corollary}{Corollaries}
\crefname{rmk}{Remark}{Remarks}
\crefname{eg}{Example}{Examples}
\theoremstyle{definition}
\newtheorem{thm}{Theorem}[section]
\newtheorem{dfn}[thm]{Definition}
\newtheorem{prp}[thm]{Proposition}
\newtheorem{lem}[thm]{Lemma}
\newtheorem{rmk}[thm]{Remark}
\newtheorem*{mth}{Theorem}
\newtheorem*{dfn*}{Definition}
\newtheorem*{rmk*}{Remark}
\newcommand{\pd}{\partial}
\newcommand{\bl}{\bullet}
\newcommand{\ve}{\varepsilon}
\newcommand{\ol}{\overline}
\newcommand{\wt}{\widetilde}
\newcommand{\ceq}{\coloneqq} %using mathtool package
\newcommand{\xrr}[1]{\xrightarrow{\ #1 \ }{}}
\newcommand{\lto}{\longrightarrow}
\newcommand{\mto}{\mapsto}
\newcommand{\lmto}{\longmapsto}
\newcommand{\bfk}{\mathbf{k}}
\newcommand{\bfx}{\mathbf{x}}
\newcommand{\bbN}{\mathbb{N}}
\newcommand{\bbQ}{\mathbb{Q}}
\newcommand{\bbZ}{\mathbb{Z}}
\newcommand{\clC}{\mathcal{C}}
\newcommand{\clG}{\mathcal{G}}
\newcommand{\frS}{\mathfrak{S}}
\newcommand{\con}{\textup{con}}
\newcommand{\sL}{\mathbf{\Lambda}}
\newcommand{\sN}{\mathrm{sNSym}}
\newcommand{\sQ}{\mathrm{sQSym}}
\newcommand{\SVec}{\mathsf{SVec}}
\newcommand{\catC}{\mathsf{C}}
\newcommand{\ev}{\ol{0}}
\newcommand{\od}{\ol{1}}
\newcommand{\Zt}{\bbZ/2\bbZ}
\newcommand{\ch}{\textup{ch}}
\newcommand{\tboplus}{{\textstyle\bigoplus}}
\newcommand{\abs}[1]{\left| #1 \right|}
\newcommand{\rst}[2]{\left. #1 \right|_{#2}}
\DeclareMathOperator{\id}{id}
\DeclareMathOperator{\Aut}{Aut}
\DeclareMathOperator{\Hom}{Hom}
\DeclareMathOperator{\Spec}{Spec}
\begin{document}

\title{On the Hopf superalgebra of symmetric functions in superspace}
\author{Masamune Hattori, Renta Yagi, Shintarou Yanagida}
%\date{2024.12.04--}
\date{March 3, 2025. Revised: July 24, 2025.}
\address{Graduate School of Mathematics, Nagoya University. Furocho, Chikusaku, Nagoya, Japan, 464-8602.}
\email{m21039e@math.nagoya-u.ac.jp, renta.yagi.e6@math.nagoya-u.ac.jp, yanagida@math.nagoya-u.ac.jp} 
\subjclass[2020]{05E05,16T05,16T30}
%05E05(1991–now)Symmetric functions and generalizations
%16T05(2010–now)Hopf algebras and their applications 
%16T30(2010–now)Connections of Hopf algebras with combinatorics
\keywords{Hopf superalgebras, super-characters, (quasi-)symmetric functions in superspace, 
chromatic symmetric functions in superspace.}

\begin{abstract}
We introduce a superspace analogue of combinatorial Hopf algebras (Aguiar--Bergeron--Sottile, 2006),
and show that the Hopf superalgebra of quasi-symmetric (resp.\ symmetric) functions in superspace 
(Fishel--Lapointe--Pinto, 2019)  is a terminal object in the category of 
all (resp.\ cocommutative) combinatorial Hopf superalgebras. 
We also introduce a superspace analogue of chromatic symmetric functions of graphs (Stanley, 1995)
using the chromatic Hopf superalgebra of two-colored graphs.
\end{abstract}

\maketitle
{\small \tableofcontents}

%%%%%%%%%%%%%%%%%%%%%%%%%%%%%%%%%%%%%%%%%%%%%%%%%%%%%%%%%%%%%%%%%%%%%%%%%%%%%%%%%%%%%%%%%%%%%%%%%%%
%%%%%%%%%%%%%%%%%%%%%%%%%%%%%%%%%%%%%%%%%%%%%%%%%%%%%%%%%%%%%%%%%%%%%%%%%%%%%%%%%%%%%%%%%%%%%%%%%%%
\section{Introduction}\label{s:intro}

%%%%%%%%%%%%%%%%%%%%%%%%%%%%%%%%%%%%%%%%%%%%%%%%%%%%%%%%%%%%%%%%%%%%%%%%%%%%%%%%%%%%%%%%%%%%%%%%%%%
\subsection*{Background}

The ring of \emph{symmetric functions} \cite[Chap.\ I]{M}, \cite[Chap.\ 7]{Sb} 
is a commonplace in several branches of mathematics, including representation theory, 
(quantum) integrable systems, and combinatorics.
It consists of the formal power series in the variables $x=(x_1,x_2,\dotsc)$,
and the ring structure is given by addition and multiplication of such series.
A closely related object is the ring of \emph{quasi-symmetric functions} \cite{G}, 
which can be seen as a refinement of symmetric functions and provides many applications 
in enumeration problems in combinatorics and algebraic geometry.

Both of these rings have the \emph{Hopf algebra} structures \cite[Chap.\ 2, Chap.\ 5]{GR}. 
Among the large number of studies on these Hopf algebras, 
let us recall the work of Aguiar, Bergeron and Sottile \cite{ABS}.
They introduced the notion of a \emph{combinatorial Hopf algebra} 
as a graded connected Hopf algebra $H$ over a field $\bfk$ 
equipped with an algebra morphism $\zeta\colon H \to \bfk$ called 
a \emph{character}, and showed that the Hopf algebra of quasi-symmetric (resp.\ symmetric) functions
is a terminal object in the category of all (resp.\ cocommutative) combinatorial Hopf algebras.
It gives a characterization of the Hopf algebras of (quasi-)symmetric functions, and 
explains the ubiquity of (quasi-)symmetric functions in representation theory and combinatorics.
See also \cite[Chap.\ 7]{GR} for further explanation of the Aguiar--Bergeron--Sottile theory.

Over the last 20 years, a \emph{superspace extension} of the theory of symmetric functions 
has been developed and has received much attention \cite{A+,B+,DLM1,DLM,DLM3}.
In the superspace setting, one considers formal power series in \emph{supervariables} 
$(x;\theta)=(x_1,x_2,\dotsc;\theta_1,\theta_2,\dotsc)$, 
where the $x_i$ are ordinary commuting variables, 
while the $\theta_i$ are the anti-commuting variables satisfying 
$\theta_i \theta_j=-\theta_j\theta_i$ and $\theta_i^2=0$.
Considering the ``diagonal permutation of supervariables'' 
(see \cref{ss:sLsQ:sL}, \eqref{eq:sL:SN-act} for the precise definition), 
one has the notion of symmetric functions 
in superspace \cite[\S2.2, \S2.3]{DLM}, \cite[\S2]{A+}.

Let us also mention the motivation from physics.
Symmetric functions are important not only in pure mathematics, 
but also in connection with mathematical physics. 
For example, Schur and Jack symmetric functions appear in
the Calogero–-Moser-–Sutherland (quantum) integrable systems \cite[\S2]{DLM},
while Macdonald symmetric functions appear in the Macdonald--Ruijsenaars
quantum integrable systems \cite[Chap.\ VI]{M}, \cite{N}.
The papers \cite{B+,DLM1,DLM,DLM3} mentioned above are motivated by 
a supersymmetric generalization of this connection.

Recently, Fishel, Lapointe and Pinto \cite{FLP} investigated a superspace analogue 
of the rich connection between symmetric function theory and quasi-symmetric functions,
and revealed the Hopf algebra structure of the symmetric functions in superspace $\sL$
and of the quasi-symmetric functions in superspace $\sQ$.
The Hopf dual of $\sQ$, denoted by $\sN$ in \cref{s:CHSA}, 
is further studied in the recent paper \cite{AGM}.
These Hopf superalgebras (see \cref{dfn:HSA} for the precise meaning) are natural 
super-analogues of the Hopf algebras of (quasi-)symmetric functions.
We will review these topics in \cref{s:sLsQ}.

%%%%%%%%%%%%%%%%%%%%%%%%%%%%%%%%%%%%%%%%%%%%%%%%%%%%%%%%%%%%%%%%%%%%%%%%%%%%%%%%%%%%%%%%%%%%%%%%%%%
\subsection*{Main results}

In this paper, we investigate a superspace analogue of Aguiar--Bergeron--Sottile theory,
and give a characterization of the Hopf superalgebras of (quasi-)symmetric functions in superspace.

Let us explain the idea of our superspace analogue.
Since a combinatorial Hopf algebra $H$ is equipped with a character $\zeta\colon H \to \bfk$,
we need a natural superspace analogue of a character.
Now, we recall some basic theory of (affine) super algebraic geometry \cite[\S1, \S2]{KV}.
A character $\zeta$ can be regarded as a morphism 
from the point $\Spec \bfk$ to ``the non-commutative scheme $\Spec H$''.
According to \cite[\S2.2]{KV}, as a superspace analogue of the point $\Spec \bfk$, 
we can consider the ``$\mathcal{N}=1$ supersymmetric particle'' $\Spec \bfk[\ve]$, 
where $\bfk[\ve]=\bfk+\bfk\ve$ is the exterior algebra of one variable $\ve$ 
(the commutative superalgebra in one odd variable $\ve$). 
This leads us to the following definition.

\begin{dfn*}[\cref{dfn:CHSA}]
Let $H$ be a Hopf superalgebra over $\bfk$.
\begin{enumerate}
\item 
An even superalgebra morphism $\zeta\colon H \to \bfk[\ve]$  
is called a \emph{supercharacter} of $H$.  

\item 
$H$ is called a \emph{combinatorial Hopf superalgebra} 
if it is connected graded (see \cref{dfn:HSA:gr})
and has a supercharacter $\zeta\colon H \to \bfk[\ve]$.
\end{enumerate}
\end{dfn*}

We will see in \cref{s:CHSA} that $\sL$ and $\sQ$ are combinatorial Hopf superalgebras 
with supercharacter $\zeta_S$ \eqref{eq:zeta_S} and $\zeta_Q$ \eqref{eq:zeta_Q}. 
The main results of this note are:

\begin{mth}[\cref{thm:CHSA:sQ}, \cref{prp:CHSA:sL}]\label{thm:main}
The Hopf superalgebra $\sQ$ (resp.\ $\sL$) is a terminal object 
in the category of all (resp.\ cocommutative) combinatorial Hopf superalgebras.
In other words, we have:

Let $A=\bigoplus_{k\ge0}A^k$ be a combinatorial Hopf superalgebra 
with supercharacter $\zeta\colon A \to \bfk[\ve]$.
\begin{enumerate}
\item 
There is a unique graded even morphism $\Psi\colon A \to \sQ$ of graded Hopf superalgebras 
such that $\zeta=\zeta_Q\circ\Psi$, where $\zeta_Q$ is the supercharacter \eqref{eq:zeta_Q}.

\item \label{i:main:2}
If $A$ is cocommutative, then there is a unique graded even morphism 
$\Psi\colon A \to \sL$ of graded Hopf superalgebras 
such that $\zeta=\zeta_S\circ\Psi$, where $\zeta_S$ is the supercharacter \eqref{eq:zeta_S}.
\end{enumerate}
\end{mth}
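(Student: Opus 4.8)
The plan is to mimic the classical Aguiar–Bergeron–Sottile argument, adapted to the superspace setting, exploiting the fact that the only genuinely new data is the odd part of the supercharacter. Given a combinatorial Hopf superalgebra $A$ with supercharacter $\zeta\colon A \to \bfk[\ve]$, I would decompose $\zeta = \zeta_0 + \ve\,\zeta_1$, where $\zeta_0\colon A\to\bfk$ is an even algebra morphism (the ``classical part'') and $\zeta_1$ is an odd linear functional. The compatibility forced by $\zeta$ being an even superalgebra morphism into $\bfk[\ve]$ should translate into $\zeta_0$ being a genuine character and $\zeta_1$ being a $\zeta_0$-derivation of the appropriate parity. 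The universal morphism $\Psi\colon A\to\sQ$ ought to be built exactly as in the classical case: on a homogeneous element $a$ of degree $n$, define $\Psi(a)$ by its coefficient on each fundamental (or monomial) quasi-symmetric function in superspace, reading off the relevant coefficient from iterated coproducts of $a$ evaluated against $\zeta$. Concretely, for each composition-type index $\alpha$ indexing a basis of $\sQ^n$, the coefficient is obtained by applying the $|\alpha|$-fold coproduct $\Delta^{(k-1)}$ and then $\zeta^{\otimes k}$ to the graded pieces, picking out the component of the correct degree and parity.

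First I would make precise the bookkeeping of signs and parities, since the coproduct on a Hopf superalgebra carries Koszul signs and $\zeta$ has both even and odd components. The formula $\Psi(a)=\sum_\alpha \bigl(\zeta_\alpha(a)\bigr)\, M_\alpha$ (where $M_\alpha$ runs over the superspace monomial quasi-symmetric basis and $\zeta_\alpha$ is the composite $A\xrightarrow{\Delta^{(k-1)}}A^{\otimes k}\to\bfk[\ve]^{\otimes k}\to\bfk[\ve]$ followed by extraction of the $\alpha$-graded, correctly-signed component) is the candidate. I would then verify that $\Psi$ is even, graded, an algebra morphism (using that $\zeta$ is multiplicative and that the product on $\sQ$ is dual to the deconcatenation-type coproduct), and a coalgebra morphism (using coassociativity of $\Delta$). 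The identity $\zeta = \zeta_Q\circ\Psi$ should fall out by taking $k=1$ in the defining formula, since $\zeta_Q$ is precisely the supercharacter that reads off the degree-one, length-one coefficient.

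The main obstacle will be uniqueness, and more precisely showing that the naively-defined $\Psi$ is forced. In the classical ABS theorem uniqueness is proved by showing that any graded Hopf morphism $\Psi$ compatible with the characters must have its coefficients determined by $\zeta$ via the iterated-coproduct formula; this uses that $\sQ$ is cofree (its coalgebra structure is determined by the length filtration) together with the fact that $\zeta_Q$ together with the coalgebra structure recovers all components. In the super setting I must check that the odd part of the supercharacter does not introduce extra freedom: the condition $\zeta_Q\circ\Psi=\zeta$ pins down both $\zeta_0$- and $\zeta_1$-components simultaneously, and the graded-coalgebra-morphism condition propagates this to all lengths. The delicate point is confirming that the sign conventions in the super-deconcatenation coproduct of $\sQ$ are exactly the ones making the formula for $\Psi$ both well-defined and forced; I expect this to require a careful induction on the length $k$, checking the parity of each tensor factor against the Koszul sign rule.

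For part \ref{i:main:2}, the cocommutative case, the plan is to observe that when $A$ is cocommutative the image of $\Psi$ lands in the commutative subalgebra of $\sQ$ fixed under the relevant symmetrization, which is exactly $\sL$. Equivalently, cocommutativity of $A$ forces the functionals $\zeta_\alpha$ to be symmetric under reordering of the parts of $\alpha$ (up to the Koszul sign), so the coefficients depend only on the underlying superpartition rather than on the superspace composition, and hence $\Psi$ factors through the inclusion $\sL\hookrightarrow\sQ$. I would then check that $\zeta_S = \zeta_Q|_{\sL}$ so that the compatibility $\zeta=\zeta_S\circ\Psi$ is inherited, and that uniqueness survives the restriction because $\sL\hookrightarrow\sQ$ is injective. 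I expect the symmetry-under-reordering step to again be where the super signs demand the most care, since swapping two odd parts of a composition contributes a sign that must be reconciled with cocommutativity of $\Delta$ in the super sense.
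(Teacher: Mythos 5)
Your proposal is correct in outline, but it takes a genuinely different route from the paper's. You build $\Psi$ directly by the explicit formula $\Psi(a)=\sum_\alpha\zeta_\alpha(a)M_\alpha$ and then verify the bialgebra-morphism axioms by hand; the paper instead works on the dual side: it defines an even map on the generators $H_r,\wt{H}_s$ of $\sN=\sQ^o$, uses that $\sN$ is the \emph{free} associative superalgebra on these generators to extend it uniquely to a graded superalgebra morphism $\Psi^*\colon\sQ^o\to A^o$, dualizes to obtain a graded even coalgebra morphism $\Psi\colon A\to\sQ$, and then deduces that $\Psi$ is also an algebra morphism from the already-established uniqueness (both $\Psi\circ\mu$ and $\mu\circ(\Psi\otimes\Psi)$ are coalgebra morphisms compatible with the supercharacters, hence equal). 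The explicit formula you start from appears in the paper only afterwards, as a corollary (\cref{prp:CHSA:psi}). The trade-off: the paper's argument gets existence, uniqueness and multiplicativity with almost no sign-chasing, at the cost of invoking the freeness of $\sN$ and finite-dimensionality of the graded pieces; your direct approach is more constructive but places the full burden on the Koszul bookkeeping you flag --- in particular, multiplicativity of $\Psi$ requires matching $\zeta(a'b')=0$ for odd $a',b'$ (from $\ve^2=0$) against the fact that two dotted parts cannot merge in the quasi-shuffle product of $\sQ$, and this is exactly the verification the paper's uniqueness trick lets one skip. For the cocommutative case the divergence is sharper: you factor $\Psi$ through $\sL\inj\sQ$ by showing the $\zeta_\alpha$ are symmetric up to Koszul sign under reordering of $\alpha$ (which additionally needs the signed rearrangement identity expressing $m_\Lambda$ in terms of the $M_\alpha$), whereas the paper reruns the dual construction using the polynomial generators $e_r,\wt{e}_s$ of $\sL$ from \eqref{eq:sL:e-gen} together with the self-duality $\sL\cong\sL^o$ of \cite[Proposition 4.8]{FLP}. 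Both routes are viable; yours is closer to the original Aguiar--Bergeron--Sottile argument, the paper's to the Grinberg--Reiner treatment, and neither step you leave unverified looks like it would actually fail.
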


The proof is based on techniques similar to those used in the non-super case \cite{GR}, 
with the key difference being the use of the Hopf superalgebra $\sN$ of 
noncommutative symmetric functions in superspace (see \cite[\S6]{FLP} and \cite{AGM} 
for details) instead of the Hopf algebra $\operatorname{NSym}$ of 
noncommutative symmetric functions used in the non-super case.

As another example of a combinatorial Hopf superalgebra, 
in the final \cref{s:chr} we will consider a superspace analogue of 
chromatic symmetric functions of graphs \cite{S}.
Extending the non-superspace case \cite[\S7.3]{GR}, we consider the space $\clG$ 
spanned by the basis elements $[G]$ associated to two-colored graphs $G=(V,W,E)$ 
(the set $W$ of white vertices, the set $V \setminus W$ of black vertices
 and the set $E$ of edges),
which is a cocommutative combinatorial Hopf superalgebra (\cref{thm:chr:clG}).
Then, by the \cref{thm:main} \ref{i:main:2} above, we have a graded even morphism
$\Psi_{\ch}\colon \clG \to \sL$ of graded Hopf superalgebras.

\begin{dfn*}[\cref{dfn:chr:PsiG}]
For a two-colored graph $G$, we call $\Psi_{\ch}([G])$ 
the \emph{chromatic symmetric function in superspace} associated to $G$.
\end{dfn*}

If $G=(V,W,E)$ satisfies $W=\emptyset$, 
then $\Psi_{\ch}([G])$ is equal to Stanley's chromatic symmetric function of the graph $(V,E)$.
We will show in \cref{prp:chr:PsiG} 
an expansion formula of chromatic symmetric functions in superspace.

To conclude this introduction, we would like to discuss the naturalness 
of our formulation of combinatorial Hopf superalgebras.
First, as explained above, it naturally extends the combinatorial Hopf algebras of 
symmetric functions and chromatic symmetric functions.
Second, our definition of a supercharacter is a natural extension of the notion of 
a character for a (non-super) combinatorial Hopf algebra,  
viewed from the perspective of supergeometry.
Third, as will be discussed below, our notion of supercharacter has the potential 
to contribute to the combinatorial study of symmetric functions on superspaces.
In the non-super case, quasi-symetric functions are useful for studying certain 
combinatorial objects related to symmetric functions.
As a chromatic analogue, Shareshian and Wachs \cite{SW} introduced 
chromatic quasi-symmetric functions, which have several applications 
in the combinatorial study of chromatic symmetric functions.
For example, Kaliszewski \cite{K} derived the hook coefficient formula for 
chromatic symmetric functions by considering chromatic quasi-symmetric functions. 
In the super case, we believe that 
a super analogue of quasi-symmetric functions can be introduced.
Our supercharacter may serve as a key concept for computing combinatorial objects in superspace, 
such as hook coefficients of chromatic symmetric functions in superspace.

%%%%%%%%%%%%%%%%%%%%%%%%%%%%%%%%%%%%%%%%%%%%%%%%%%%%%%%%%%%%%%%%%%%%%%%%%%%%%%%%%%%%%%%%%%%%%%%%%%%
\subsection*{Organization}

Let us explain the organization of this note.

\cref{s:HSA} and \cref{s:sLsQ} are preliminary.
In \cref{s:HSA}, we fix our terminology on Hopf algebras in super setting.
In \cref{s:sLsQ}, we review the Hopf superalgebras $\sL$ and $\sQ$ of 
symmetric and quasi-symmetric functions in superspace, 
largely following \cite{DLM}, \cite{A+} and \cite{FLP}.

\cref{s:CHSA} is the main part of this paper.
We introduce the notion of a combinatorial Hopf superalgebra in \cref{dfn:CHSA}, 
and characterize $\sQ$ (resp.\ $\sL$) as a terminal object of the category of 
all (resp.\ cocommutative) Hopf superalgebras in \cref{thm:CHSA:sQ} (resp.\ \cref{prp:CHSA:sL}).
This can be seen as a superspace extension of the Aguiar--Bergeron--Sottile theory.

The final \cref{s:chr} gives a superspace analogue of chromatic symmetric functions.
We introduce the chromatic Hopf superalgebra 
and show that it is combinatorial (\cref{thm:chr:clG}).
Then, in \cref{dfn:chr:PsiG}, 
we introduce the notion of a chromatic symmetric function in superspace
using the universality of $\sL$, and study some examples (\cref{prp:chr:Kn1}) 
and the expansion formula (\cref{prp:chr:PsiG}).

%%%%%%%%%%%%%%%%%%%%%%%%%%%%%%%%%%%%%%%%%%%%%%%%%%%%%%%%%%%%%%%%%%%%%%%%%%%%%%%%%%%%%%%%%%%%%%%%%%%
\subsection*{Global notation}%\label{ss:intro:ntn}

The following list gives an overview of the terminology and notations used in the text.
\begin{itemize}
\item
The symbol $\bbN$ denotes the set $\{0,1,2,\dotsc\}$ of all nonnegative integers.

\item
We use the standard symbols $\bbZ$ and $\bbQ$ of integers and rational numbers.

\item
For a (finite) set $S$, we denote its cardinality by $\# S$. %or $\abs{S}$. 

\item 
A ring or an algebra means a unital associative one unless otherwise stated.

\item
A coalgebra means a counital coassociative one unless otherwise stated.

\item
The symbol $\pd_x$ denotes the partial differential $\frac{\pd}{\pd x}$ with respect to $x$.

\item
The symbol $\delta_{i,j}$ denotes the Kronecker delta.

\item
We follow \cite{DM} and \cite[\S1]{BE} for the terminology of super mathematics.
We denote the cyclic group of order $2$ by $\Zt=\{\ev,\od\}$,
and a $\Zt$-grading is called \emph{parity}.
The parity of an object $x$ is denoted by $\abs{x} \in \Zt$.
\end{itemize}

%%%%%%%%%%%%%%%%%%%%%%%%%%%%%%%%%%%%%%%%%%%%%%%%%%%%%%%%%%%%%%%%%%%%%%%%%%%%%%%%%%%%%%%%%%%%%%%%%%%
\section{Hopf superalgebra}\label{s:HSA}

Let us fix the terminology and notation of what we will call Hopf superalgebras.

Let $\bfk$ be a field of characteristic $0$,
and consider linear spaces and linear maps over $\bfk$.
A superspace means a $\Zt$-graded linear space.
For a superspace $V$, 
the even and odd homogeneous parts are denoted by $V_{\ev}$ and $V_{\od}$, respectively.
A linear map $f\colon V \to W$ between superspaces $V$ and $W$ is even (resp.\ odd)
if it preserves (resp.\ reverses) the parity of elements. 
A linear map $f$ decomposes as 
$f=f_{\ev}+f_{\od}$ with $f_{\ev}$ even and $f_{\od}$ odd.
This makes the set $\Hom(V,W)$ of linear maps into a superspace.
Then, superspaces and linear maps between them form a supercategory $\SVec$ 
in the sense of \cite[Definition 1.1]{BE}.

$\SVec$ has the standard structure of a monoidal supercategory \cite[Definition 1.4]{BE}
with tensor product $(V \otimes W)_{\ev}\ceq V_{\ev} \otimes W_{\ev}+V_{\od}\otimes W_{\od}$
and $(V \otimes W)_{\od}\ceq V_{\ev} \otimes W_{\od}+V_{\od}\otimes W_{\ev}$ 
for superspaces $V,W$, 
and $(f \otimes g)(v \otimes w)\ceq (-1)^{\abs{g}\abs{v}}f(v) \otimes g(w)$
for linear maps $f,g$ and $v\in V$, $w \in W$.
Here and hereafter we denote by $\abs{x} \in \Zt$ the parity of the object $x$.
Note that the composition of tensor products of linear maps has a sign:
$(f\otimes g)\circ(h \otimes i)=(-1)^{\abs{g}\abs{h}}(f \circ h)\otimes (g \circ i)$.

Given a monoidal supercategory $\catC$, 
one has the notions of a \emph{superalgebra object},
of a (\emph{super}-)\emph{coalgebra object},
and of a (\emph{super}-)\emph{bialgebra object} in $\catC$.
Now, let us introduce: 

\begin{dfn}\label{dfn:HSA}
A \emph{Hopf superalgebra} $H$ (over $\bfk$) is a Hopf algebra object 
in the monoidal supercategory $\SVec$.
\end{dfn}

The monoidal supercategory $\SVec$ is symmetric 
with braiding $u \otimes v \mto (-1)^{\abs{u}\abs{v}}v \otimes u$.
Hence, we have the notions of a \emph{commutative} Hopf superalgebra and 
of a \emph{cocommutative} Hopf superalgebra.

In preparation for \cref{s:CHSA}, let us also introduce:

\begin{dfn}\label{dfn:HSA:gr}
Let $H$ be a Hopf superalgebra.
\begin{enumerate}
\item 
$H$ is called \emph{graded} if it has an additional $\bbN$-grading denoted as 
\begin{align}\label{eq:gr-super}
 H=\bigoplus_{k\ge0}H^k=\bigoplus_{k\ge0}(H^k_{\ev} \oplus H^k_{\od}).
\end{align}

\item
$H$ is called \emph{graded connected} if it is graded, $H=\bigoplus_{k\ge0}H^k$,
and $H^0=\bfk$.

\item
A \emph{graded morphism} $H \to H'$ of Hopf superalgebras is a morphism of Hopf superalgebras
which preserves the $\bbN$-grading.
\end{enumerate}
\end{dfn}

\begin{rmk}
In this note, we will always denote the $\bbN$-grading of a graded Hopf superalgebra $H$
by the upper index $H^{*}$, and the parity ($\Zt$-grading) 
by the lower index $H_{\ol{*}}$ as in \eqref{eq:gr-super}.
\end{rmk}

For a graded Hopf superalgebra $H=\bigoplus_{k\ge0}H^k$, we denote the graded dual by 
$H^o \ceq \bigoplus_{k \ge 0}(H^k)^*$, $(H^k)^* \ceq \Hom(H^k,\bfk)$.
If $\dim H^k < \infty$ for each $k \ge 0$, then the graded dual $H^o$ 
has a natural Hopf superalgebra structure.
(See \cite[\S1.6]{GR} for the even case.
 For the existence of the antipode, see \cite[Theorem 2.1]{FLP}.)
 %use ... and \cite[Proposition 1.4.16]{GR}.)

\begin{dfn}\label{dfn:Hdual}
We call the Hopf superalgebra structure on $H^o$ the \emph{Hopf dual} of $H$.
\end{dfn}

%%%%%%%%%%%%%%%%%%%%%%%%%%%%%%%%%%%%%%%%%%%%%%%%%%%%%%%%%%%%%%%%%%%%%%%%%%%%%%%%%%%%%%%%%%%%%%%%%%%
\section{Symmetric and quasi-symmetric functions in superspace}\label{s:sLsQ}

Here we recall from \cite[\S\S3--5]{FLP} the Hopf superalgebras $\sL$ and $\sQ$ 
of symmetric and quasi-symmetric functions in superspace.
Let $\bfk$ be a field of characteristic $0$.

%%%%%%%%%%%%%%%%%%%%%%%%%%%%%%%%%%%%%%%%%%%%%%%%%%%%%%%%%%%%%%%%%%%%%%%%%%%%%%%%%%%%%%%%%%%%%%%%%%%
\subsection{Symmetric functions in superspace}\label{ss:sLsQ:sL}

Following \cite[\S2.2]{DLM}, \cite[\S2]{A+} and \cite[\S3]{FLP}, 
we denote by $\bfk[x,\theta]_N$ the polynomial algebra in the supervariables 
\[
 (x;\theta) = (x_1,\dotsc,x_N; \theta_1,\dotsc,\theta_N)
\]
with even $x$ and odd $\theta$, and consider the action of the symmetric group $\frS_N$ 
on $\bfk[x,\theta]_N$ in the diagonal way:  
\begin{align}\label{eq:sL:SN-act}
 \sigma(x_1,\dotsc,x_N; \theta_1,\dotsc,\theta_N) \ceq 
 (x_{\sigma(1)},\dotsc,x_{\sigma(N)}; \theta_{\sigma(1)},\dotsc,\theta_{\sigma(N)}) 
 \quad (\sigma \in \frS_N).
\end{align}
The invariant ring $\bfk[x,\theta]_N^{\frS_N}$ 
is called the ring of symmetric polynomials in $N$ supervariables. 
It is doubly graded: 
\begin{align}
 \bfk[x,\theta]_N^{\frS_N} = 
 \bigoplus_{n \ge 0} \bigoplus_{0 \le m \le N} (\bfk[x,\theta]_N^{\frS_N})_{n,m},  
\end{align}
where $(\bfk[x,\theta]_N^{\frS_N})_{n,m}$ is the space of homogeneous elements 
of degree $n$ in $x$ and of degree $m$ in $\theta$. 
The degree $n$ with respect to $x$ called the \emph{total degree},
and that with respect to $\theta$ called the \emph{fermionic degree}.

The space $\sL$ of symmetric functions in superspace
is defined by the inverse limit \cite[\S2.2]{A+}, \cite[\S3.1]{FLP}.
That is, for $M>N$, we consider a linear map 
$(\bfk[x,\theta]_M^{\frS_M})_{n,m} \to (\bfk[x,\theta]_N^{\frS_N})_{n,m}$ 
defined by $x_{N+1} = \dotsb = x_M=0$, $\theta_{N+1} = \dotsb = \theta_M=0$, 
while sending the other variables identically. 
Then we have an inverse system $\{ (\bfk[x,\theta]_N^{\frS_N})_{n,m} \}_N $ 
of linear spaces for each $n,m\ge0$.
Now we define a doubly graded linear space $\sL$ by 
\begin{align}
 \sL \ceq \bigoplus_{n,m \ge 0} \sL_{n,m}, \quad 
 \sL_{n,m} \ceq \varprojlim_N (\bfk[x,\theta]_N^{\frS_N})_{n,m},
\end{align}
and call it the space of symmetric functions in superspace.
%We denote the bigrading of the superspace $\sL$ of symmetric functions in superspace $\sL$
%by $\sL = \bigoplus_{n,m\ge0}\sL_{n,m}$ \cite[(3.5)]{FLP}, where 
For each $n,m \ge 0$, the subspace $\sL_{n,m}$ consists of symmetric polynomials 
in infinite supervariables of total degree $n$ and fermionic degree $m$. 
The space $\sL$ is an algebra which inherits the multiplication on $\bfk[x,\theta]_N^{\frS_N}$.

Using this bigrading, we can introduce an $\bbN$-graded superalgebra structure on $\sL$. 
First, define the parity ($\Zt$-grading) by 
\begin{align}\label{eq:sL:parity}
 \sL = \sL_{\ev}\oplus\sL_{\od}, \quad 
 \sL_{\ol{p}} \ceq \bigoplus_{n \ge 0, \, m \equiv \ol{p} \bmod 2}\sL_{n,m} \quad (p=0,1).
\end{align}
Then, $\sL$ is a commutative superalgebra (in the supercategory $\SVec$). 
Moreover, it has an additional $\bbN$-grading 
\begin{align}\label{eq:sL:N-grd}
 \sL = \bigoplus_{k \ge 0}\sL^k, \quad 
 \sL^k \ceq \bigoplus_{n+m=k} \sL_{n,m} \quad (k \ge 0), 
\end{align} 
which makes $\sL$ a graded commutative superalgebra. 

To explain the Hopf superalgebra structure on $\sL$, 
we recall from \cite[\S2.4, \S\S3.1--3.3]{DLM}, \cite[\S2.3]{A+} and \cite[\S3]{FLP}
some basic supersymmetric functions. 
We will freely use the notions and symbols of superpartitions 
in \cite[\S2.1]{DLM}, \cite[\S2.1]{A+} and \cite[\S3]{FLP}.
%To begin with, we define the analogous of monomial symmetric functions. 
Let $\Lambda=(\Lambda^a;\Lambda^s)=(\Lambda_1,\dotsc,\Lambda_m;\Lambda_{m+1},\dotsc,\Lambda_N)$ 
be a superpartition of length $\le N$, total degree $n$ and fermionic degree $m$.
Then, the monomial polynomial $m_{\Lambda} \in (\bfk[x,\theta]_N^{\frS_N})_{n,m}$ is defined as
\begin{align}
 m_\Lambda(x,\theta) \ceq (\# \Aut(\Lambda^s))^{-1} 
 \sum_{\sigma \in \frS_N} \sigma(x^\lambda \theta_1\cdots\theta_m),
\end{align}
where $\Aut(\Lambda^s)$ denotes the order of the automorphism group of the partition 
$\Lambda^s=(\Lambda_{m+1},\dotsc,\Lambda_n)$ 
(see \cite[(2.39), (2.40)]{DLM} and \cite[(2.12)]{A+} for details),
and the $\frS_N$-action is given by \eqref{eq:sL:SN-act}.
The family $\{m_\Lambda \in (\bfk[x,\theta]_N^{\frS_N})_{n,m}\}_N$ is stable 
under the inverse limit, and we have the \emph{monomial symmetric function in superspace}
\begin{align}\label{eq:sL:mL}
 m_\Lambda \in \sL_{n,m}
\end{align}
for each superpartition $\Lambda$ of total degree $n$ and fermionic degree $m$.
The family $\{m_\Lambda \mid \text{$\Lambda$: superpartitions}\}$ is a basis of the space $\sL$.

Next, the \emph{elementary symmetric functions in superspace} $e_r,\wt{e}_s \in \sL$ are defined by 
\begin{align}\label{eq:sL:ee}
 e_r \ceq m_{(\emptyset; 1^r)}, \quad \wt{e}_s \ceq m_{(0; 1^s)} \quad (r \ge 1, \, s \ge 0),
\end{align}
Here we denoted $1^r \ceq \overbrace{1,\dotsc,1}^r$.
%We also use the symbol $e_r$(resp. $\wt{e}_k$) to denote 
%the inverse limit of $e_r$(resp. $\wt{e}_k$), due to notational abuse. 
%For a superpartition 
%$\Lambda=(\Lambda_1,\dotsc,\Lambda_m;\Lambda_{m+1},\dotsc,\Lambda_l)$, we set 
%$e_{\Lambda} \ceq \wt{e}_{\Lambda_1}\dotsc\wt{e}_{\Lambda_m}e_{\Lambda_{m+1}}\dotsc e_{\Lambda_l}$. 
%Since $(e_{\lambda})_{\lambda \in \SPar}$ forms a basis of $\sL$, 
Then, the superalgebra $\sL$ is the polynomial superalgebra of $e_r$ and $\wt{e}_s$ 
\cite[Theorem 21]{DLM}, \cite[(2.20)]{A+}:
\begin{align}\label{eq:sL:e-gen}
 \Lambda \cong \bfk[e_1,e_2,\dotsc; \wt{e}_0,\wt{e}_1,\dotsc].
\end{align}
Below we use the additional symbol $e_0 \ceq 1 \in \sL$.

Now we can explain the comultiplication $\Delta$ on $\sL$ \cite[Proposition 4.1]{FLP}.
On the generator $e_r$ and $\wt{e}_s$, it is given by
\begin{align}
 \Delta(     e_r) = \sum_{k,l\ge0, \, k+l=r}      e_k \otimes e_l, \quad 
 \Delta(\wt{e}_s) = \sum_{k,l\ge0, \, k+l=s}(\wt{e}_k \otimes e_l+e_k \otimes \wt{e}_l). 
\end{align}
By \cite[Proposition 4.8]{FLP}, 
this comultiplication is cocommutative (in the supercategory $\SVec$)
and graded with respect to the $\bbN$-grading \eqref{eq:sL:N-grd}, 
and a superalgebra morphism $\sL \to \sL \otimes \sL$. %and self-dual
Since $\sL$ is connected, i.e. $\sL^0=\sL^0_{\ev} \oplus \sL^0_{\od}=\bfk$, 
by \cite[Theorem 2.1]{FLP}, the bialgebra $\sL$ has a unique antipode, 
and thus is a Hopf superalgebra. 
Let us summarize as follows. 
%the properties discussed above of the ring of supersymmetric functions $\sL$. 

\begin{prp}[{\cite[\S4]{FLP}}]\label{prp:sL}
The space $\sL$ of symmetric functions in superspace has 
a connected graded Hopf superalgebra structure which is commutative and cocommutative.
The parity is \eqref{eq:sL:parity} and the $\bbN$-grading is \eqref{eq:sL:N-grd}.
%The superspace $\sL$ has a basis $\{m_\Lambda\}$ consisting of the monomial symmetric functions
%in superspace \cite[(3.3)]{FLP} labeled by superpartitions or 
%dotted partitions $\Lambda$ \cite[Definition 3.1]{FLP}.
\end{prp}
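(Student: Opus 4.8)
The plan is to verify each of the asserted structures on $\sL$ by reducing them to statements that the paper has already cited from \cite{FLP} and \cite{DLM}, and then assembling them. The proposition collects four claims: that $\sL$ is a graded connected Hopf superalgebra, that its product is commutative, that its coproduct is cocommutative, and that the parity and $\bbN$-grading are the stated ones. I would treat these in the order in which the supporting structure was built up in the preceding text.

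First I would establish the superalgebra side. The text already records that $\sL$ inherits the multiplication on each $\bfk[x,\theta]_N^{\frS_N}$ and that, via the isomorphism $\sL \cong \bfk[e_1,e_2,\dotsc;\wt e_0,\wt e_1,\dotsc]$ of \eqref{eq:sL:e-gen}, the product is graded commutative in $\SVec$ with respect to the parity \eqref{eq:sL:parity} and the $\bbN$-grading \eqref{eq:sL:N-grd}. So the commutativity claim is essentially the statement that the Koszul sign rule is respected, which follows from the anticommutativity $\theta_i\theta_j=-\theta_j\theta_i$ of the odd supervariables; I would note that the fermionic degree $m$ computes the parity, so two homogeneous elements of fermionic degrees $m_1,m_2$ commute up to $(-1)^{m_1 m_2}$, exactly the braiding in $\SVec$. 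The connectedness $\sL^0=\bfk$ is immediate from \eqref{eq:sL:N-grd} since $\sL_{0,0}=\bfk$ and no other $\sL_{n,m}$ contributes to degree $0$.

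Next I would handle the coalgebra and bialgebra structure by directly quoting the preceding paragraph. The comultiplication $\Delta$ on the generators $e_r,\wt e_s$ is defined there following \cite[Proposition 4.1]{FLP}, and \cite[Proposition 4.8]{FLP} is cited for precisely the three facts we need: that $\Delta$ is cocommutative in $\SVec$, that it is graded for \eqref{eq:sL:N-grd}, and that it is a superalgebra morphism $\sL\to\sL\otimes\sL$. Thus $\sL$ is a graded bialgebra object in $\SVec$. To upgrade to a Hopf superalgebra I would invoke the standard fact, cited as \cite[Theorem 2.1]{FLP}, that a connected graded bialgebra admits a unique antipode; this uses the connectedness established in the previous step. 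At this point all four claims are in hand.

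I do not expect a genuine obstacle here, since the proposition is essentially a summary of results assembled from \cite{FLP} and \cite{DLM}, and the author's proof is likely just a collection of citations. The only point requiring a little care is the \emph{consistency} of the two gradings: one must check that the parity \eqref{eq:sL:parity} and the $\bbN$-grading \eqref{eq:sL:N-grd} are each compatible with both the product and the coproduct simultaneously, so that $\sL$ is an object of $\SVec$ that is \emph{also} $\bbN$-graded in the sense of \cref{dfn:HSA:gr}. Concretely, the product and coproduct each preserve fermionic degree modulo $2$ (hence parity) and preserve total plus fermionic degree (hence the $\bbN$-grading), which one reads off from the explicit formulas for $\Delta$ on generators and from the fact that multiplication of monomials adds both degrees. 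Verifying this compatibility is routine bookkeeping rather than a real difficulty, so the proof reduces to citing the structural results and recording these grading checks.
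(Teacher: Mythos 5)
Your proposal is correct and follows essentially the same route as the paper, which gives no separate proof: the proposition is a summary assembled in the preceding paragraphs by citing \cite[Proposition 4.1]{FLP} for $\Delta$ on the generators $e_r,\wt{e}_s$, \cite[Proposition 4.8]{FLP} for cocommutativity, gradedness and the superalgebra-morphism property, and \cite[Theorem 2.1]{FLP} for the antipode via connectedness. Your added remark on checking that both gradings are simultaneously compatible with product and coproduct is the same routine bookkeeping the paper implicitly relies on.
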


%%%%%%%%%%%%%%%%%%%%%%%%%%%%%%%%%%%%%%%%%%%%%%%%%%%%%%%%%%%%%%%%%%%%%%%%%%%%%%%%%%%%%%%%%%%%%%%%%%%
\subsection{Quasi-symmetric functions in superspace} 

This subsection is based on \cite[\S5, \S6]{FLP}. 

Let us recall the notion of a \emph{dotted composition} from \cite[Definition 5.1]{FLP}.
It is a finite sequence $\alpha=(\alpha_1,\dotsc,\alpha_l)$ of entries belonging to 
the set $\{1,2,3,\dotsc\} \cup \{\dot{0},\dot{1},\dot{2},\dotsc\}$.
For such $\alpha$, we define $\eta(\alpha)=(\eta_1,\dotsc,\eta_l) \in \{0,1\}^l$ with 
\begin{align}\label{eq:sQ:eta}
 \eta_k \ceq \begin{cases}
  1 & (\text{if $\alpha_k$ is dotted}) \\ 0 & (\text{otherwise}) 
 \end{cases}.
\end{align}
The total degree of $\alpha$ is defined to be $\sum_{k=1}^l \alpha_k$, 
where the dotted entries $\alpha_k = \dot{n}$ are replaced by $n$.
The fermionic degree of $\alpha$ is defined to be the number of dotted entries in $\alpha$.

Let $(x;\theta)=(x_1,x_2,\dotsc; \theta_1,\theta_2,\dotsc)$ be the set of infinite supervariables
as before, and let $R$ be the ring of formal power series of finite degree in $(x;\theta)$. 
An element $f \in R$ is called a \emph{quasi-symmetric function in superspace} 
if for every dotted composition $\alpha=(\alpha_1,\dotsc,\alpha_l)$, all monomials 
$\theta^{\eta_1}_{i_1} \dotsm \theta^{\eta_l}_{i_l}x^{\alpha_1}_{i_1} \dotsm x^{\alpha_l}_{i_l}$ 
in $f$ with indices $i_1< \dotsb < i_l$ have the same coefficient, 
where $(\eta_1,\dotsc,\eta_l)=\eta(\alpha)$ is given by \eqref{eq:sQ:eta}.

Quasi-symmetric functions in superspace form a linear space, which is denoted by $\sQ$. 
It has a bigrading by total degree $n$ and fermionic degree $m$ \cite[(5.2)]{FLP}, 
which is denoted by 
\begin{align}
 \sQ = \bigoplus_{n,m\ge0}\sQ_{n,m}.
\end{align}
Then, $\sQ$ is a commutative superalgebra whose parity is the fermionic degree modulo 2:
\begin{align}\label{eq:sQ:parity}
 \sQ = \sQ_{\ev} \oplus \sQ_{\od}, \quad 
 \sQ_{\ol{p}} \ceq \bigoplus_{n \ge 0, \, m \equiv p \bmod 2} \sQ_{n,m} \quad (p=0,1).
\end{align}
It has an additional $\bbN$-grading: 
\begin{align}\label{eq:sQ:N-grd}
 \sQ = \bigoplus_{k\ge0} \sQ^k, \quad 
 \sQ^k \ceq \bigoplus_{n+m=k} \sQ_{n,m} \quad (k \ge 0).
\end{align}
One can see that the graded superspace $\sL$ of symmetric functions in superspace 
is a subspace of $\sQ$.

In order to explain the comultiplication $\Delta$ of $\sQ$,
let us introduce a natural basis \cite[\S5.1]{FLP}.
For each dotted composition $\alpha=(\alpha_1,\dotsc,\alpha_l)$, 
we define the \emph{monomial quasi-symmetric function} $M_{\alpha}$ by
\begin{align}\label{eq:sQ:M}
 M_{\alpha} \ceq \sum_{i_1<\dotsb<i_l} 
 \theta^{\eta_1}_{i_1} \dotsm \theta^{\eta_l}_{i_l}x^{\alpha_1}_{i_1} \dotsm x^{\alpha_l}_{i_l}. 
\end{align}
The family $\{M_{\alpha} \mid \text{$\alpha$: dotted compositions}\}$ is a basis of $\sQ$. 
%See \cite[Proposition 5.5]{FLP} for the product expansion formula. 

Now, the comultiplication $\Delta$ is given in terms of $M_\alpha$, 
$\alpha=(\alpha_1,\dotsc,\alpha_l)$, as 
\begin{align}
 \Delta(M_{\alpha}) = 
 \sum_{k=0}^l M_{(\alpha_1,\dotsc,\alpha_k)} \otimes M_{(\alpha_{k+1},\dotsc,\alpha_{l})}. 
\end{align}
Note that $\Delta$ is not cocommutative (see \cite[Example 5.8]{FLP} for an explicit example).
By \cite[Proposition 5.9]{FLP}, $\Delta$ is graded with respect to the $\bbN$-grading 
\eqref{eq:sL:N-grd}, and is a superalgebra morphism $\sQ \to \sQ \otimes \sQ$. 
Since $\sQ$ is connected, i.e., $\sQ^0=\sQ^0_{\ol{0}} \oplus \sQ^0_{\ol{1}}=\bfk$, 
by \cite[Theorem 2.1]{FLP}, the bialgebra $\sQ$ has a unique antipode, 
and is a Hopf superalgebra. Let us summarize as follows. 

\begin{prp}[{\cite[\S 5]{FLP}}]\label{prp:sQ}
The space $\sQ$ of quasi-symmetric functions in superspace has 
a connected graded Hopf superalgebra structure which is commutative but not cocommutative. 
The parity is \eqref{eq:sQ:parity} and the $\bbN$-grading is \eqref{eq:sQ:N-grd}.
%The superspace $\sQ$ has a basis $\{M_\alpha\}$ consisting of the monomial quasi-symmetric functions
%in superspace \cite[(5.3)]{FLP} labeled by dotted compositions $\alpha$ \cite[Definition 5.1]{FLP}.
\end{prp}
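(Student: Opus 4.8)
The plan is to assemble the Hopf-superalgebra axioms from the structures introduced above, following \cite[\S5]{FLP} and organizing every verification around the monomial basis $\{M_\alpha\}$ of \eqref{eq:sQ:M}. First I would check that the product of $R$ restricts to $\sQ$ and makes it a commutative superalgebra in $\SVec$. Expanding $M_\alpha M_\beta$ and grouping monomials by the relative order of the two increasing index families yields a finite \emph{signed quasi-shuffle} sum $\sum_\gamma \pm M_\gamma$; the signs arise from permuting the anti-commuting $\theta$-factors back into increasing order, and any term placing two dotted entries at a common index vanishes because $\theta_i^2=0$. The Koszul sign rule for reordering the $\theta$-variables then yields $M_\alpha M_\beta=(-1)^{\abs{\alpha}\abs{\beta}}M_\beta M_\alpha$, with $\abs{\alpha}$ the fermionic degree of $\alpha$ modulo $2$, which is exactly commutativity in $\SVec$. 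Since the total and fermionic degrees are additive along each quasi-shuffle term, multiplication respects both the parity \eqref{eq:sQ:parity} and the $\bbN$-grading \eqref{eq:sQ:N-grd}.

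Next I would treat the coalgebra structure. The deconcatenation coproduct $\Delta(M_\alpha)=\sum_{k=0}^l M_{(\alpha_1,\dotsc,\alpha_k)}\otimes M_{(\alpha_{k+1},\dotsc,\alpha_l)}$ is well defined on the basis, and coassociativity follows at once from the associativity of splitting $\alpha$ into three consecutive blocks; the counit projecting onto the empty composition satisfies the counit axioms. Both $\Delta$ and the counit are even and preserve the $\bbN$-grading, since $n+m$ is additive under concatenation.

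The heart of the matter is the bialgebra compatibility, that $\Delta$ is a superalgebra morphism $\sQ\to\sQ\otimes\sQ$ in $\SVec$. One compares $\Delta(M_\alpha M_\beta)$, obtained by deconcatenating each quasi-shuffle term, with $\Delta(M_\alpha)\Delta(M_\beta)$, computed via the signed multiplication $(\mu\otimes\mu)\circ(\id\otimes\tau\otimes\id)$ of $\SVec$, where $\tau$ is the braiding $u\otimes v\mapsto(-1)^{\abs{u}\abs{v}}v\otimes u$. The underlying combinatorial identity---matching deconcatenations of quasi-shuffles with quasi-shuffles of deconcatenations---is classical; the genuinely super content is that the Koszul sign produced by $\tau$ on the odd $\theta$-parts agrees term by term with the sign generated by reordering the $\theta$-factors inside the product. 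I expect this sign bookkeeping to be the main obstacle, and it is exactly the content of \cite[Proposition 5.9]{FLP}, which I would invoke.

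Finally, connectedness is immediate: the empty composition is the only dotted composition of $\bbN$-degree $0$, so $\sQ^0=\bfk M_{\emptyset}=\bfk$. By \cite[Theorem 2.1]{FLP}, a connected graded bialgebra in $\SVec$ has a unique antipode, defined by the usual recursion on the grading, so $\sQ$ is a Hopf superalgebra. Commutativity was shown above, while non-cocommutativity is witnessed by a single example: for $\alpha=(\dot{0},1)$, the appearance of $M_{(\dot{0})}\otimes M_{(1)}$ in $\Delta(M_\alpha)$ but of $M_{(1)}\otimes M_{(\dot{0})}$ in $\tau\circ\Delta(M_\alpha)$ shows $\Delta\neq\tau\circ\Delta$, as in \cite[Example 5.8]{FLP}.
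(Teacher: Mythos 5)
Your proposal is correct and follows essentially the same route as the paper, which likewise builds the structure on the monomial basis $\{M_\alpha\}$ with the deconcatenation coproduct and defers the substantive verifications to \cite[Proposition 5.9]{FLP} (bialgebra compatibility), \cite[Example 5.8]{FLP} (non-cocommutativity), and \cite[Theorem 2.1]{FLP} (existence of the antipode from connectedness). The extra detail you supply on the signed quasi-shuffle product and the Koszul sign bookkeeping is consistent with, and a reasonable elaboration of, what the paper cites.
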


%%%%%%%%%%%%%%%%%%%%%%%%%%%%%%%%%%%%%%%%%%%%%%%%%%%%%%%%%%%%%%%%%%%%%%%%%%%%%%%%%%%%%%%%%%%%%%%%%%%
\section{Combinatorial Hopf superalgebra}\label{s:CHSA}

We consider a superspace analogue of a combinatorial Hopf algebra \cite{ABS} 
(see also \cite[\S7]{GR}).

As before, let $\bfk$ be a field of characteristic $0$.
We denote by $\bfk[\ve]$ be the commutative superalgebra over $\bfk$ 
generated by an odd element $\ve$. 
Thus, we have $\ve^2=0$ and $\bfk[\ve]=\bfk+\bfk\ve$ as linear spaces.

\begin{dfn}\label{dfn:CHSA}
Let $H$ be a Hopf superalgebra over $\bfk$.
\begin{enumerate}
\item 
An even superalgebra morphism $\zeta\colon H \to \bfk[\ve]$  is called a \emph{supercharacter} of $H$.    

\item 
A \emph{combinatorial Hopf superalgebra} is a pair $(H,\zeta)$ consisting of 
a connected graded Hopf superalgebra $H$ and a supercharacter $\zeta\colon H \to \bfk[\ve]$.

\item 
Let $(H,\zeta)$ and $(H',\zeta')$ be combinatorial Hopf superalgebras.
A \emph{morphism} $(H,\zeta) \to (H',\zeta')$ of combinatorial Hopf superalgebras is 
a graded even morphism $\psi\colon H \to H'$ of Hopf superalgebras such that $\zeta=\zeta'\circ\psi$.
This yields the \emph{category of combinatorial Hopf superalgebras}.
\end{enumerate}
\end{dfn}

Note that, for a supercharacter $\zeta\colon H \to \bfk[\ve]$, 
we have $\zeta(1)=1$, $\abs{\zeta(a)}=\abs{a}$ and $\zeta(ab)=\zeta(a)\zeta(b)$ for $a,b \in H$.
In particular, for $a,b \in H_{\od}$, we have $\zeta(a),\zeta(b)\in \bfk\ve$ and $\zeta(ab)=0$.

%For $\zeta,\ve\colon H \to \bfk$, 
%the convolution product $\zeta\ve\colon H \to \bfk$ is defined by
%\[
% H \xrr{\Delta_H} H \otimes H \xrr{\zeta\otimes\ve} \bfk \otimes \bfk \xrr{m_{\bfk}} \bfk.
%\]
%The set $\bbX(H)$ of supercharacters of $H$ is a group under the convolution product.

For example, the Hopf superalgebra $\sQ$ has a supercharacter 
\begin{align}\label{eq:zeta_Q}
 \zeta_Q\colon \sQ \lto \bfk[\ve]
\end{align}
given by the specialization $x_1=1$, $x_2=x_3=\dotsb=0$ and 
$\theta_1=\ve$, $\theta_2=\theta_3=\dotsb=0$. 
The same specialization defines a supercharacter on the Hopf superalgebra $\sL$:
\begin{align}\label{eq:zeta_S}
 \zeta_S\colon \sL \lto \bfk[\ve].
\end{align}
Then, by \cref{prp:sL,prp:sQ},
the pairs $(\sQ,\zeta_Q)$ and $(\sL,\zeta_S)$ are combinatorial Hopf superalgebras.

Another example of a supercharacter on $\sQ$ is given by 
the specialization $x_i=1$, $\theta_i=\ve$ and $x_j=\theta_j=0$ ($j \ne i$)
for some fixed $i$. The case $i=1$ gives $\zeta_S$.
Note that the same specialization gives $\zeta_S$ on $\sL$ for any $i$.

As will be shown in \cref{thm:CHSA:sQ}, \cref{prp:CHSA:psi} and \cref{prp:CHSA:sL} below, 
the supercharacters $\zeta_Q$ and $\zeta_S$ are natural from the perspective of 
the structure theory of the category of combinatorial Hopf superalgebras.

Now we come to the main statement of this note,
which is a superspace extension of \cite[Theorem 7.1.3]{GR}.

\begin{thm}\label{thm:CHSA:sQ}
The pair $(\sQ,\zeta_Q)$ is a terminal object 
of the category of combinatorial Hopf superalgebras.
In other words, we have: \\
Let $A=\bigoplus_{k\ge0}A^k$ be a combinatorial Hopf superalgebra 
with supercharacter $\zeta\colon A \to \bfk[\ve]$.
Then, there is a unique graded even morphism $\Psi\colon A \to \sQ$ of graded Hopf superalgebras 
such that $\zeta=\zeta_Q\circ\Psi$, where $\zeta_Q$ is the supercharacter \eqref{eq:zeta_Q}.
\end{thm}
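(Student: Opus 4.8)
The plan is to adapt the argument of \cite[\S7.1]{GR}, building $\Psi$ explicitly through its coordinates in the monomial basis $\{M_\alpha\}$ of $\sQ$ and extracting both existence and uniqueness from the coalgebra structure. First I would split the supercharacter as $\zeta=\zeta^{(0)}+\zeta^{(1)}\ve$ with $\zeta^{(0)},\zeta^{(1)}\colon A \to \bfk$; because $\zeta$ is even, $\zeta^{(0)}$ vanishes on $A_{\od}$ and $\zeta^{(1)}$ on $A_{\ev}$. Given a dotted composition $\alpha=(\alpha_1,\dotsc,\alpha_l)$ with dot-pattern $\eta(\alpha)=(\eta_1,\dotsc,\eta_l)$, let $n_k$ be $\alpha_k$ with its dot (if any) removed and set $c_k\ceq n_k+\eta_k$, so that $(c_1,\dotsc,c_l)$ is the vector of $\bbN$-degrees of the parts. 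I then define a higher supercharacter $\zeta_\alpha\colon A \to \bfk$ as the iterated coproduct $A \xrightarrow{\Delta^{(l-1)}} A^{\otimes l}$, followed by projection onto the graded summand $A^{c_1}\otimes\dotsb\otimes A^{c_l}$, followed by $\zeta^{\otimes l}$, and finally by extracting the coefficient of the basis vector of $\bfk[\ve]^{\otimes l}$ carrying $\ve$ exactly in the factors where $\eta_k=1$ (fixing once and for all the sign convention for this basis, so that the Koszul signs from the braiding of $\SVec$ are accounted for). Finally I set $\Psi(a)\ceq\sum_\alpha \zeta_\alpha(a)\,M_\alpha$, summing over dotted compositions whose $\bbN$-degree equals that of homogeneous $a$.

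Next I would check the formal properties. Gradedness is built in, and evenness follows from a parity count: $\zeta_\alpha(a)\ne0$ forces factor $k$ of $\Delta^{(l-1)}(a)$ to be odd exactly when $\eta_k=1$, so the fermionic degree $\#\{k:\eta_k=1\}$ of $M_\alpha$ agrees modulo $2$ with $\abs{a}$, since $\Delta^{(l-1)}$ preserves parity. The coalgebra-morphism identity $\Delta_{\sQ}\circ\Psi=(\Psi\otimes\Psi)\circ\Delta_A$ is a formal consequence of coassociativity together with the fact that $\Delta_{\sQ}(M_\alpha)$ is deconcatenation, so the iterated coproducts defining $\zeta_\alpha$ split compatibly over the factors. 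The identity $\zeta=\zeta_Q\circ\Psi$ then reduces to length-one terms: as $\zeta_Q(M_\alpha)=0$ for every $\alpha$ of length $\ge2$, while $\zeta_Q(M_{(N)})=1$ and $\zeta_Q(M_{(\dot{m})})=\ve$, only the length-one compositions $\alpha=(N)$ and $\alpha=(\dot{N-1})$ survive for $a\in A^N$, contributing $\zeta^{(0)}(a)$ and $\zeta^{(1)}(a)\ve$, which reassemble into $\zeta(a)$.

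For uniqueness I would argue directly. If $\Psi'$ is any graded even morphism of Hopf superalgebras with $\zeta=\zeta_Q\circ\Psi'$, then the $M_\alpha$-coordinate of $\Psi'(a)$ is recovered by applying $\zeta_Q^{\otimes l}\circ\Delta^{(l-1)}_{\sQ}$ (with $l$ the length of $\alpha$ and the same extraction as above); using that $\Psi'$ is a coalgebra morphism this equals $(\zeta_Q\circ\Psi')^{\otimes l}\circ\Delta^{(l-1)}_A=\zeta^{\otimes l}\circ\Delta^{(l-1)}_A$, which is precisely $\zeta_\alpha$. Hence $\Psi'=\Psi$, and the same computation confirms that the coordinate-extraction on $\sQ$ is the one used to define $\zeta_\alpha$, so that the construction is forced. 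Compatibility with the antipodes is then automatic, since any bialgebra morphism between connected graded Hopf superalgebras intertwines the (unique) antipodes.

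The main obstacle is showing that $\Psi$ is an \emph{algebra} morphism. Here one must identify $\Psi(ab)$ with $\Psi(a)\,\Psi(b)$, where the product $M_\alpha M_\beta$ in $\sQ$ is a signed quasi-shuffle: one forms the overlapping shuffles of the letters of $\alpha$ and $\beta$, merging an overlapped pair by adding total degrees and combining their dots (with $\theta_i^2=0$ killing any overlap of two dotted letters), and reordering the $\theta$'s into increasing index order produces Koszul signs $(-1)^{\abs{\cdot}\abs{\cdot}}$ from the braiding. To match this I would expand $\zeta_\alpha(ab)$ using that $\Delta^{(l-1)}$ is an algebra morphism in $\SVec$ and that $\zeta$ is multiplicative, then reorganize the resulting double sum into the quasi-shuffle expansion. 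The delicate point, genuinely new to the superspace setting, is that the signs produced when $\zeta^{\otimes l}$ is applied across the tensor factors must cancel exactly against the anticommutation signs of the quasi-shuffle product in $\sQ$; this sign-bookkeeping is where I expect the bulk of the work to lie. Alternatively, one could try to sidestep it by dualizing through the Hopf superalgebra $\sN=\sQ^o$ of noncommutative symmetric functions in superspace and checking multiplicativity on generators, as suggested by the remark preceding the theorem.
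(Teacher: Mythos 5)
Your construction is correct in outline but runs dual to the paper's: you take the explicit formula $\Psi(a)=\sum_\alpha \zeta_\alpha(a)M_\alpha$ as the \emph{definition} (this is the formula the paper derives only a posteriori, in \cref{prp:CHSA:psi}), whereas the paper first builds the adjoint $\Psi^*$ on graded duals, using that $\sN=\sQ^o$ is the free associative superalgebra on the $H_r$ and $\wt{H}_s$, and then dualizes, so that $\Psi$ is a coalgebra morphism by construction. Your verifications of gradedness, evenness, the coalgebra property via deconcatenation, the identity $\zeta=\zeta_Q\circ\Psi$ from the length-one terms, and uniqueness by coordinate recovery all go through; your degree convention $c_k=n_k+\eta_k$ for dotted parts (matching $\wt{H}_s\in\sN^{s+1}_{\od}$) is the right one.

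The genuine gap is the step you yourself flag: multiplicativity of $\Psi$. You leave the signed quasi-shuffle identity relating $\zeta_\alpha(ab)$ to the products $M_\beta M_\gamma$ as sign-bookkeeping still to be done, and that computation is the one place where the super setting really bites. But you do not need it: the uniqueness you have already established closes this step formally. The tensor square $A\otimes A$ is again a connected graded Hopf superalgebra, $\zeta\circ\mu_A=\mu_{\bfk[\ve]}\circ(\zeta\otimes\zeta)$ is a supercharacter on it (commutativity of $\bfk[\ve]$ enters here), and both $\Psi\circ\mu_A$ and $\mu_{\sQ}\circ(\Psi\otimes\Psi)$ are graded even coalgebra morphisms $A\otimes A\to\sQ$ intertwining this supercharacter with $\zeta_Q$; since your coordinate-recovery argument uses only that the source is a connected graded coalgebra, the two maps coincide. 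This is exactly how the paper concludes, via the two commuting triangles at the end of the proof of \cref{thm:CHSA:sQ}. Your fallback suggestion of dualizing through $\sN$ and checking multiplicativity on generators is, in effect, the paper's route for the existence half; either way, no quasi-shuffle computation is required.
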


%Note that, in the statement, 
%$\Psi$ is even so that it preserves the parity ($\Zt$-grading) of Hopf superalgebras,
%and is graded so that it preserves the $\bbN$-grading. 

For the proof, we need the Hopf superalgebra $\sN$ of 
noncommutative symmetric functions in superspace 
(see \cite[\S6]{FLP} and \cite{AGM} for details).
It is defined to be the Hopf dual of $\sQ$ (c.f.\ \cref{dfn:Hdual}).
Precisely speaking, its underlying $\bfk$-linear space is the bigraded dual of $\sQ$, i.e.,  
$\sN \ceq \bigoplus_{n,m\ge0}\sN_{n,m}$ with $\sN_{n,m}\ceq(\sQ_{n,m})^*$.
Also, $\sN$ has a $\bfk$-basis $\{H_\alpha\}$ labeled by dotted compositions $\alpha$
which is dual to the family $\{M_\alpha\}$ of 
monomial quasi-symmetric functions in superspace \eqref{eq:sQ:M}.
By \cite[Proposition 6.2]{FLP}, $\sN$ is isomorphic (as an algebra)  
to the free associative algebra with noncommuting generators 
$\{H_1,H_2,\dotsc,;\wt{H}_0,\wt{H}_1,\dotsc\}$, where we set 
\begin{align}\label{eq:CHSA:HrwHs}
 H_r \ceq H_{(r)} \quad (r \ge 1), \quad \wt{H}_s \ceq H_{(\dot{s})} \quad (s \ge 0).   
\end{align}
The Hopf superalgebra $\sN$ is connected graded for the grading 
$\sN=\bigoplus_{k\ge0}\sN^k$ with $\sN^k \ceq \bigoplus_{n+m=k}\sN_{n,m}$.
Note that we have $H_r \in \sN_{r,0} \subset \sN^r_{\ev}$ ($r\ge1$) 
and $\wt{H}_s \in \sN_{s,1} \subset \sN^{s+1}_{\od}$ ($s\ge0$). 

Let us also remark that giving a supercharacter on $\sQ$ is equivalent to 
giving a family of functionals in each homogeneous space 
$\sQ^k_{\ol{p}} = \bigoplus_{n+m=k, \, m \equiv \ol{p} \bmod 2}\sQ_{n,m}$. 
Under this equivalence, the supercharacter $\zeta_Q$ corresponds to the functionals $H_r,\wt{H}_s$.
That is, for $f \in \sQ^k$, we have
\begin{align}\label{eq:sQ:func}
 \zeta_Q(f) = 
 \begin{cases}(H_k, f) & (\abs{f}=\ev) \\ (\wt{H}_{k-1}, f) & (\abs{f}=\od) \end{cases},
\end{align}
where $\abs{p}$ denotes the parity of $f$.

\begin{proof}[Proof of \cref{thm:CHSA:sQ}]
We follow the non-super case \cite[Theorem 7.1.3]{GR}. 
First, note that we can decompose the given $\zeta\colon A \to \bfk[\ve]$ 
as $\zeta=\zeta_{\ev} \oplus \zeta_{\od}\ve$, 
where $\zeta_{\ol{p}}\colon A_{\ol{p}} \to \bfk$ ($p=0,1$) are linear, 
$\zeta_{\ev}$ is even and $\zeta_{\od}$ is odd.
Let us define an even linear map 
\begin{align}\label{eq:zeta'}
 \zeta' \colon \left(\tboplus_{r\ge1} \bfk      H_r\right) \oplus 
               \left(\tboplus_{s\ge0} \bfk \wt{H}_s\right) 
               \lto A^o = A^o_{\ev} \oplus A^o_{\od},
\end{align}
where $A^o \ceq \bigoplus_{k\ge0}(A^k)^*$ is the graded dual 
of the graded superspace $A=\bigoplus_{k\ge0}A^k$, by
\begin{align}
\label{eq:CHSA:zeta'Hr}
 \zeta'(H_r)(a) &\ceq 
 \begin{cases}
 \zeta_{\ev}(a)& (a \in A^r_{\ev})\\
 0 & (\text{$a$ is in the other homogeneous spaces})
 \end{cases},
\\
\label{eq:CHSA:zeta'wHs}
 \zeta'(\wt{H}_s)(a) &\ceq 
 \begin{cases}
 \zeta_{\od} (a) & (a \in A^{s+1}_{\od} ), \\
 0 & (\text{$a$ is in the other homogeneous spaces})
 \end{cases}.
\end{align}
Since $\sQ^o=\sN$ is the free associative superalgebra 
$\bfk\langle H_1,H_2,\dotsc;\wt{H}_0,\wt{H}_1,\dotsc\rangle$, 
the map $\zeta'$ extends uniquely to an even morphism $\Psi^* \colon \sQ^o \to A^o$
of superalgebras. 
Moreover, the definition of $\zeta'$ implies that $\Psi^*$ preserves the $\bbN$-grading. 
Since each homogeneous subspace $\sQ^k$ ($k\in\bbN$) is finite dimensional,  
there exists a unique graded even coalgebra morphism $\Psi \colon A \to \sQ$ 
which is a graded dual of $\Psi^*$. 

Then, for $a \in A^r_{\ev}$ ($r\ge 1$), we have 
\begin{align}
 (\zeta_Q \circ \Psi)(a) = H_r(\Psi(a)) = \Psi^*(H_r)(a) = \zeta_{\ev}(a) = \zeta(a),
\end{align}
where the first equality follows from \eqref{eq:sQ:func}. 
Similarly, for $a \in A^{s+1}_{\od}$ ($s \ge 0$), we have 
\begin{align}
 (\zeta_Q \circ \Psi)(a) = \wt{H}_s(\Psi(a))\ve = 
 \Psi^*(\wt{H}_s)(a)\ve  = \zeta_{\od}(a)\ve = \zeta(a). 
\end{align}
Thus, we have a unique graded even coalgebra morphism $\Psi$ 
such that $\zeta=\zeta_Q \circ \Psi$.

It remains to show that $\Psi$ is an algebra morphism. 
Consider the following two diagrams: 
\begin{align}
\begin{tikzcd}[ampersand replacement=\&]
 A^{\otimes 2} \ar[r, "\mu^{\otimes 2}"] \ar[dr, "\zeta^{\otimes 2}"'] \& 
 A \ar[r, "\Psi"] \ar[d, "\zeta"] \& 
 \sQ \ar[dl, "\zeta_Q"] \& 
 A^{\otimes 2} \ar[r, "\Psi^{\otimes 2}"] \ar[dr, "\zeta^{\otimes 2}"'] \& 
 \sQ^{\otimes 2} \ar[r, "\mu"]  \ar[d, "\zeta_Q^{\otimes 2}"] \&  
 \sQ \ar[dl, "\zeta_Q"] \\
 \& \bfk \oplus \bfk \ve \& \& 
 \& \bfk \oplus \bfk \ve 
\end{tikzcd}
\end{align}
Since both diagrams commute and both $\Psi \circ (\mu \otimes \mu)$ 
and $\mu \circ (\Psi \otimes \Psi)$ are coalgebra morphisms, 
the uniqueness proved above implies that 
$\Psi \circ (\mu \otimes \mu)=\mu \circ (\Psi \otimes \Psi)$. 
The proof is now completed.
\end{proof}

\begin{rmk*}
The Hopf superalgebra $\sN$ admits a supercharacter $\zeta_N$ defined by the specialization 
on generators: $\zeta_N(H_1) = 1$, $\zeta_N(H_2)=\zeta_N(H_3)=\dotsb=0$,
$\zeta_N(\wt{H}_0)=\ve$ and $\zeta_N(\wt{H}_1)=\zeta_N(\wt{H}_2)=\dotsb=0$.
\end{rmk*}

We have the following formula of the image $\Psi(a)$ of $a \in A$ in \cref{thm:CHSA:sQ},
which is a superspace extension of \cite[(7.1.3)]{GR}.

\begin{prp}\label{prp:CHSA:psi}
Let $(A,\zeta)$ and $\Psi\colon A \to \sQ$ be the same as in \cref{thm:CHSA:sQ}.
Then, for a homogeneous element $a \in A^k$, $k \ge 0$, we have 
\begin{align}\label{eq:CHSA:Psi-sQ}
 \Psi(a) = \sum_{\text{$\alpha$: dotted compositions of $k$}} \zeta_\alpha(a) M_\alpha,
\end{align}
where, for a dotted composition $\alpha=(\alpha_1,\dotsc,\alpha_l)$, 
the symbol $M_\alpha$ denotes the monomial quasi-symmetric function \eqref{eq:sQ:M}, 
and the map $\zeta_\alpha$ is the composite 
\begin{align}\label{eq:CHSA:zeta_alpha}
 \zeta_\alpha\colon 
 A^k \xrr{ \Delta^{(l-1)} } A^{\otimes l} \xrr{\pi_\alpha} 
 A^{\alpha_1}_{\ol{\eta_1}} \otimes \dotsb \otimes A^{\alpha_l}_{\ol{\eta_l}}
 \xrr{\zeta^{\otimes l}} \bfk.
\end{align}
Here $\eta(\alpha)=(\eta_1,\dotsc,\eta_l)$ is given in \eqref{eq:sQ:eta},
the dotted superscript $\alpha_k=\dot{n}$ of $A^{\alpha_k}_{\ol{\eta_k}}$ 
are replaced by $n$,
and the symbol $\pi_\alpha$ is the natural projection.
\end{prp}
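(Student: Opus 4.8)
The plan is to read off the coefficients of $\Psi(a)$ in the monomial basis $\{M_\alpha\}$ by pairing against the dual basis $\{H_\alpha\}$ of $\sN=\sQ^o$, and then to identify each coefficient with $\zeta_\alpha(a)$ via the fact that $\Psi$ is the graded dual of the algebra morphism $\Psi^*$ constructed in the proof of \cref{thm:CHSA:sQ}. Since $\{M_\alpha\}$ and $\{H_\alpha\}$ are dual bases, every $f\in\sQ$ expands as $f=\sum_\alpha (H_\alpha,f)M_\alpha$. Taking $f=\Psi(a)$ for a homogeneous $a\in A^k$ and using that $\Psi$ is graded, so that $\Psi(a)\in\sQ^k$, only the dotted compositions $\alpha$ of $k$ contribute. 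Thus $\Psi(a)=\sum_\alpha (H_\alpha,\Psi(a))M_\alpha$ with $\alpha$ ranging over dotted compositions of $k$, and it remains to prove $(H_\alpha,\Psi(a))=\zeta_\alpha(a)$ for each such $\alpha=(\alpha_1,\dotsc,\alpha_l)$.

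First I would invoke the adjunction $(H_\alpha,\Psi(a))=(\Psi^*(H_\alpha),a)$ coming from $\Psi$ being the graded dual of $\Psi^*$ (no sign arises here, as $\Psi^*$ is even). Writing $H_\alpha$ as the ordered product $H_{\alpha_1}\dotsm H_{\alpha_l}$ of the free generators of $\sN$—each factor being $H_r$ when $\alpha_k=r$ is undotted and $\wt{H}_s$ when $\alpha_k=\dot s$ is dotted—and using that $\Psi^*$ is a superalgebra morphism, I obtain $\Psi^*(H_\alpha)$ as the product $\zeta'(H_{\alpha_1})\dotsm\zeta'(H_{\alpha_l})$ in $A^o$. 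Pairing this product against $a$ unfolds, through the super-duality between the product of $A^o$ and the iterated coproduct $\Delta^{(l-1)}$ of $A$, into $\bigl(\zeta'(H_{\alpha_1})\otimes\dotsb\otimes\zeta'(H_{\alpha_l}),\,\Delta^{(l-1)}(a)\bigr)$. By \eqref{eq:CHSA:zeta'Hr} and \eqref{eq:CHSA:zeta'wHs}, each $\zeta'(H_{\alpha_k})$ is supported precisely on the homogeneous component $A^{\alpha_k}_{\ol{\eta_k}}$ appearing in \eqref{eq:CHSA:zeta_alpha}, where it acts as $\zeta_{\ol{\eta_k}}$; hence the paired expression is exactly the composite $\zeta_\alpha$ of \eqref{eq:CHSA:zeta_alpha} evaluated at $a$.

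The main obstacle will be the bookkeeping of Koszul signs. Passing from the dual basis element $H_\alpha$ to the ordered product $H_{\alpha_1}\dotsm H_{\alpha_l}$ is governed by the super-dual of the deconcatenation coproduct on $\sQ$, and a short computation gives the two-factor rule $H_\beta H_\gamma=(-1)^{\abs{\beta}\abs{\gamma}}H_{\beta\gamma}$, so that $H_{\alpha_1}\dotsm H_{\alpha_l}$ differs from $H_\alpha$ by the sign $(-1)^{\binom{m}{2}}$, where $m$ is the number of dotted (odd) entries of $\alpha$. Unfolding the product $\zeta'(H_{\alpha_1})\dotsm\zeta'(H_{\alpha_l})$ against $\Delta^{(l-1)}(a)$ introduces, by the same super-pairing convention, the matching sign $(-1)^{\binom{m}{2}}$. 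Because $\Psi^*$ is an even superalgebra morphism, these two signs are computed with the identical sign rule and therefore cancel, yielding the sign-free identity $(H_\alpha,\Psi(a))=\zeta_\alpha(a)$ and hence \eqref{eq:CHSA:Psi-sQ}. I would make the cancellation rigorous by an induction on the length $l$, checking at each step that the sign $(-1)^{\abs{\beta}\abs{\gamma}}$ produced when splitting off the last generator in $\sN$ matches the Koszul sign produced when splitting $\Delta^{(l-1)}$ as $\Delta^{(l-2)}\otimes\id$, after projecting onto the relevant homogeneous components.
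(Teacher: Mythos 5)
Your argument is correct and follows essentially the same route as the paper's proof: expand $\Psi(a)$ against the dual basis $\{H_\alpha\}$ of $\sN=\sQ^o$, use the adjunction $(H_\alpha,\Psi(a))=(\Psi^*(H_\alpha),a)$, factor $H_\alpha$ into the generators $H_{(\alpha_1)},\dotsc,H_{(\alpha_l)}$, apply that $\Psi^*$ is a superalgebra morphism, and unfold the resulting product in $A^o$ against $\Delta^{(l-1)}(a)$ using \eqref{eq:CHSA:zeta'Hr} and \eqref{eq:CHSA:zeta'wHs}. The only difference is that you make the Koszul-sign cancellation explicit, whereas the paper absorbs it by citing $H_\alpha=H_{(\alpha_1)}\dotsm H_{(\alpha_l)}$ from \cite[(6.4)]{FLP} and implicitly using the same duality convention on both sides; your observation that the two signs are both $(-1)^{\binom{m}{2}}$ and cancel is a harmless (and arguably clarifying) refinement of the same argument.
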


\begin{proof}
By the proof of \cref{thm:CHSA:sQ}, we have for $a \in A^k$ that 
\begin{align}
 \Psi(a) = \sum_{\text{$\alpha$: dotted composition of $k$}} (H_\alpha,\Psi(a)) M_\alpha,
\end{align}
and for each dotted superscript $\alpha=(\alpha_1,\dotsc,\alpha_l)$, we have  
\begin{align}
 (H_\alpha,\Psi(a)) = (\Psi^*(H_\alpha),a) = 
 \bigl(\Psi^*(H_{(\alpha_1)}) \dotsm \Psi^*(H_{(\alpha_l)}),a\bigr) = 
 (\zeta^{\otimes l}\circ\pi_\alpha)\bigl(\Delta^{(l-1)}(a)\bigr) = \zeta_\alpha(a).
\end{align}
Here  we used the equality $H_\alpha=H_{(\alpha_1)}\dotsm H_{(\alpha_l)}$ \cite[(6.4)]{FLP}
in the second equality, and we used \eqref{eq:CHSA:HrwHs}, \eqref{eq:CHSA:zeta'Hr} 
and \eqref{eq:CHSA:zeta'wHs} in the third equality.
\end{proof}

Now we turn to the Hopf superalgebra $\sL$ of symmetric functions in superspace (\cref{ss:sLsQ:sL}).
It is a combinatorial Hopf superalgebra with supercharacter $\zeta_S$ \eqref{eq:zeta_S},
and is cocommutative.
We have the following characterization.

\begin{prp}\label{prp:CHSA:sL}
The pair $(\sL,\zeta_S)$ is a terminal object of 
the category of cocommutative combinatorial Hopf superalgebras.
In other words, we have: \\
Let $A$ be a cocommutative combinatorial Hopf superalgebra 
with supercharacter $\zeta\colon A \to \bfk[\ve]$.
Then, there is a unique graded even morphism $\Psi\colon A \to \sL$ of Hopf superalgebras
such that $\zeta=\zeta_S\circ\Psi$, 
where $\zeta_S\colon \sL \to \bfk[\ve]$ is the supercharacter \eqref{eq:zeta_S}.

Moreover, for a homogeneous element $a \in A^k$, $k \ge 0$, we have 
\begin{align}\label{eq:CHSA:Psi-sL}
 \Psi(a) = \sum_{\text{$\Lambda$: superpartitions of $k$}} \zeta_\Lambda(a)m_\Lambda,
\end{align}
where, for a superpartition $\Lambda=(\Lambda_1,\dotsc,\Lambda_m;\Lambda_{m+1},\dotsc,\Lambda_l)$, 
the symbol $m_\Lambda$ denotes the monomial symmetric function in superspace \eqref{eq:sL:mL},
and the symbol $\zeta_\Lambda$ denotes the composite \eqref{eq:CHSA:zeta_alpha} 
for the dotted composition $(\dot{\Lambda}_1,\dotsc,\dot{\Lambda}_m,\Lambda_{m+1},\dotsc,\Lambda_l)$.
\end{prp}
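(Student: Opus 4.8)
The plan is to deduce the statement from the universal property of $\sQ$ established in \cref{thm:CHSA:sQ}, using the inclusion $\iota\colon \sL \inj \sQ$ of Hopf superalgebras. Applying \cref{thm:CHSA:sQ} to the given cocommutative $(A,\zeta)$ produces a unique graded even Hopf superalgebra morphism $\Psi\colon A \to \sQ$ with $\zeta=\zeta_Q\circ\Psi$. Since $\zeta_Q$ and $\zeta_S$ are defined by the same specialization, we have $\zeta_S=\zeta_Q\circ\iota$; hence once we know $\Psi$ factors as $\Psi=\iota\circ\Psi'$, the factored map $\Psi'\colon A\to\sL$ automatically satisfies $\zeta_S\circ\Psi'=\zeta_Q\circ\iota\circ\Psi'=\zeta$. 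Uniqueness of $\Psi'$ is then immediate: any graded even Hopf morphism $A\to\sL$ compatible with $\zeta_S$ composes with $\iota$ to a morphism $A\to\sQ$ compatible with $\zeta_Q$, which is unique by \cref{thm:CHSA:sQ}, and $\iota$ is injective.

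The heart of the argument is therefore to show $\Psi(A)\subseteq\sL$ when $A$ is cocommutative, which I would establish by dualizing. Taking graded duals turns $\Psi$ into a superalgebra morphism $\Psi^{*}\colon \sN=\sQ^{o}\to A^{o}$, and cocommutativity of $A$ makes $A^{o}$ a supercommutative superalgebra. Because $\sN$ is the free associative superalgebra on the even generators $H_r$ and the odd generators $\wt H_s$, the morphism $\Psi^{*}$ into the supercommutative target $A^{o}$ kills all supercommutators and hence factors through the super-abelianization $q\colon \sN\srj\sN^{\mathrm{ab}}$. The key structural input is that $\sN^{\mathrm{ab}}$ is identified with $\sL$: imposing supercommutativity on $\bfk\langle H_1,H_2,\dotsc;\wt H_0,\wt H_1,\dotsc\rangle$ yields the free supercommutative superalgebra on the same generators, matching the polynomial presentation $\sL\cong\bfk[e_1,e_2,\dotsc;\wt e_0,\wt e_1,\dotsc]$ of \eqref{eq:sL:e-gen}, and under this identification $q$ is the transpose of $\iota$. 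Granting this, $\Psi^{*}$ factors as $\sN\xrr{q}\sL\to A^{o}$, and dualizing back shows $\Psi$ factors through $\iota$, i.e.\ $\Psi(A)\subseteq\sL$.

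I expect the main obstacle to be exactly this identification of the super-abelianization of $\sN$ with $\sL$ compatibly with the duality pairing, that is, the claim that $q$ is dual to $\iota$. This is the superspace analogue of the self-duality of the symmetric functions and of the fact that $\operatorname{Sym}$ is the commutative quotient of $\operatorname{NSym}$; I would prove it by checking that the generators $H_r,\wt H_s$ map under $q$ to a generating family of $\sL$ (the complete homogeneous symmetric functions in superspace) and that this is compatible with the pairing between $\sN$ and $\sQ$, citing \cite[\S6]{FLP} and \cite{AGM} for the structure of $\sN$ and its pairing with $\sQ$.

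Finally, the explicit formula \eqref{eq:CHSA:Psi-sL} follows from \eqref{eq:CHSA:Psi-sQ} by passing to the monomial symmetric function basis. Since $\Psi(a)\in\sL$, I would expand $\Psi(a)=\sum_{\Lambda}c_{\Lambda}m_{\Lambda}$ and use that each $m_{\Lambda}$ expands as a sum of monomial quasi-symmetric functions over the distinct dotted rearrangements of $\Lambda$, in which the sorted dotted composition $(\dot\Lambda_1,\dotsc,\dot\Lambda_m,\Lambda_{m+1},\dotsc,\Lambda_l)$ occurs with coefficient $1$ and which occurs in no other $m_{\Lambda'}$. Matching the coefficient of this distinguished $M_\alpha$ against \eqref{eq:CHSA:Psi-sQ} then gives $c_{\Lambda}=\zeta_\alpha(a)=\zeta_{\Lambda}(a)$, yielding \eqref{eq:CHSA:Psi-sL}; the only delicate point is the bookkeeping of signs from the odd variables $\theta_i$ in the $m_\Lambda$-to-$M_\alpha$ expansion, which I expect to be harmless for the distinguished sorted term.
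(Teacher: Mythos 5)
Your proposal is correct in outline but takes a genuinely different route from the paper. The paper does not pass through $\sQ$ at all: it directly mimics the proof of \cref{thm:CHSA:sQ} with $\sL$ in place of $\sN$, defining $\Psi^*\colon\sL\to A^o$ on the generators $e_r,\wt e_s$ of the free supercommutative presentation \eqref{eq:sL:e-gen} (this extension exists precisely because $A^o$ is supercommutative when $A$ is cocommutative), and then dualizing via the self-duality $\sL\cong\sL^o$ of \cite[Proposition 4.8]{FLP}. You instead reuse the universal map $\Psi\colon A\to\sQ$ and show its image lands in $\sL$ by factoring $\Psi^*\colon\sN\to A^o$ through the super-abelianization $\sN^{\mathrm{ab}}$; this buys you uniqueness, the identity $\zeta=\zeta_S\circ\Psi$, and formula \eqref{eq:CHSA:Psi-sL} essentially for free from the $\sQ$ case, whereas the paper has to rerun those verifications. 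The price is your key lemma, which you correctly isolate: you must identify $\sN^{\mathrm{ab}}$ with $\sL^o$ \emph{as a quotient of $\sN$ dual to the inclusion $\iota$}. Note that this is not quite a consequence of \eqref{eq:sL:e-gen} as you suggest, since $\iota^*$ sends $H_r,\wt H_s$ to the complete homogeneous functions $h_r,\wt h_s$, not to $e_r,\wt e_s$; you need the separate fact that the $h_r,\wt h_s$ also freely generate $\sL$ as a supercommutative superalgebra (available in \cite{DLM,FLP}, or deducible by a graded dimension count against the number of superpartitions, since $\iota^*$ is surjective and kills supercommutators). With that input, your annihilator argument in each finite-dimensional graded piece does show $\Psi(A^k)\subseteq\sL^k$, and your extraction of \eqref{eq:CHSA:Psi-sL} from \eqref{eq:CHSA:Psi-sQ} via the coefficient of the distinguished sorted $M_{\alpha(\Lambda)}$ in $m_\Lambda$ is sound (that coefficient is $+1$, as the identity permutation contributes $\theta_1\cdots\theta_m x^\Lambda$ on both sides). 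Your approach is in fact closer to the original Aguiar--Bergeron--Sottile treatment of the cocommutative case than the paper's is.
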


\begin{proof}
It is enough to slightly modify the proof of \cref{thm:CHSA:sQ}. 
Note first that the graded dual $A^o$ is a commutative superalgebra
since $A$ is now a cocommutative co(-super)algebra.
Then, recalling the description 
$\sL \cong \bfk[e_1,e_2,\dotsc;\wt{e}_0,\wt{e}_1,\dotsc]$ \eqref{eq:sL:e-gen}, 
we modify \eqref{eq:zeta'} and define an even linear map 
\begin{align} %\label{eq:CHSA:zeta''}
 \zeta''\colon &\left(\tboplus_{r\ge1} \bfk      e_r\right) \oplus 
                \left(\tboplus_{s\ge0} \bfk \wt{e}_s\right) 
                \lto A^o = A^o_{\ev} \oplus A^o_{\od}, \\
 \zeta''(e_r)(a) &\ceq 
 \begin{cases}
 \zeta_{\ev}(a)& (a \in A^r_{\ev})\\
 0 & (\text{$a$ is in the other homogeneous spaces})
 \end{cases}, \\
 \zeta''(\wt{e}_s)(a) &\ceq 
 \begin{cases}
 \zeta_{\od} (a) & (a \in A^{s+1}_{\od} ), \\
 0 & (\text{$a$ is in the other homogeneous spaces})
 \end{cases}.
\end{align}
Since $\sL$ is the polynomial superalgebra generated 
by the $e_r$ and the $\wt{e}_s$ \eqref{eq:sL:e-gen},
the map $\zeta''$ extends uniquely to an even morphism $\Psi^*\colon \sL \to A^o$ 
which preserves the $\bbN$-grading.
Then, since $\sL \cong \sL^o$, i.e.,  $\sL$ is self-dual \cite[Proposition 4.8]{FLP},
we have a graded even morphism $\Psi\colon A \to \sL^o \cong \sL$. 
The remaining parts of the statement can be shown as in \cref{thm:CHSA:sQ}.
\end{proof}

%%%%%%%%%%%%%%%%%%%%%%%%%%%%%%%%%%%%%%%%%%%%%%%%%%%%%%%%%%%%%%%%%%%%%%%%%%%%%%%%%%%%%%%%%%%%%%%%%%%
\section{A superspace analogue of chromatic symmetric functions}\label{s:chr}

We consider a superspace analogue of 
the Hopf algebra of Stanley's chromatic symmetric functions \cite[\S7.3]{GR}.

Let us denote a finite graph by $(V,E)$, where $V$ is the vertex set and $E$ is the edge set
(both are assumed to be finite).
Consider a triple $G=(V,W,E)$ consisting of a finite graph $(V,E)$ and a subset $W \subset V$,
equipped with an order of the connected components of $(V,E)$.
Such a triple can be regarded as a two-colored graph with white vertices $W$ and 
black vertices $V \setminus W$.
We call $G$ connected if $(V,E)$ is connected.
We depict such a triple by white vertices $W$ and black vertices $V \setminus W$,
and inserting a bar $|$ among the ordered connected components.
For example, the diagram below shows a triple $G=(V,W,E)$ with $\# V=7$, $\# W=3$ 
and the number of connected components is $2$.
\begin{align}
 \bl - \bl - \circ | \bl - \circ - \circ - \bl
%\begin{tikzcd}[sep=tiny,ampersand replacement=\&]
% \bl \ar[r,no head] \& \bl \ar[d,no head] \ar[r,no head] \& \circ | 
% \circ   \ar[r,no head] \& \circ \ar[r, no head] \& \circ \\ \& \bl
%\end{tikzcd}
\end{align}
We also denote by $\emptyset$ the empty triple.

Let $\clG'$ be the $\bfk$-linear space spanned by $[G]$ corresponding to each triple $G$, 
and $\clC \subset \clG'$ be the subspace spanned by the elements
\begin{align}
 [G_1|G_2] - (-1)^{\#W_1 \cdot \# W_2} [G_2|G_1] \quad (G_i=(V_i,W_i,E_i), \text{connected}).
\end{align}
We define $\clG$ to be the quotient space 
\begin{align}
 \clG \ceq \clG' / \clC,    
\end{align}
and denote the class in $\clG$ of $[G] \in \clG'$ by the same symbol $[G]$.
In particular, the following holds in $\clG$.
\begin{align}\label{eq:chr:subsp}
 [\circ|\circ] = 0 %[\bl - \circ | \circ - \circ- \circ] = 0.   
\end{align}

The space $\clG$ has a double grading 
\begin{align}
 \clG = \bigoplus_{n,m\ge0}\clG_{n,m},
\end{align}
where $\clG_{n,m}$ is spanned by $[G]$ with $G=(V,W,E)$, $\# V=n$ and $\# W=m$.
It induces an $\bbN$-grading: 
\begin{align}\label{eq:chr:clG^*}
 \clG = \bigoplus_{k\ge0}\clG^k, \quad 
 \clG^k \ceq \bigoplus_{n+m=k}\clG_{n,m}.
\end{align}
Note that we have 
\begin{align}\label{eq:chr:clG^0}
 \clG^0 = \bfk = \bfk[\emptyset].
\end{align}
We make $\clG$ into a superspace by defining the $\Zt$-grading to be $m \bmod 2$:
\begin{align}
 \clG = \clG_{\ev} \oplus, \clG_{\od}, \quad 
 \clG_{\ol{p}} \ceq \bigoplus_{n\ge0, \, m \bmod 2 = \ol{p}} \clG_{n,m} \quad (p=0,1).
\end{align}

With these definitions, we see that the subspace
\begin{align}\label{eq:chr:Gn0}
 \bigoplus_{n\ge0}\clG_{n,0} \subset \clG
\end{align}
is equal to the underlying linear space of the chromatic Hopf algebra 
in \cite[Definition 7.3.1]{GR}.    

The superspace $\clG$ has the following structure of a connected graded Hopf superalgebra,
which is a natural super analogue of \cite[Definition 7.3.1]{GR}.
The multiplication is 
\begin{align}
 [G_1] \cdot [G_2] \ceq [G_1|G_2].
\end{align}
Then $\clG$ is a commutative superalgebra whose unit is $[\emptyset]$,
and generated by $[G_{\con}]$ with connected triples $G_{\con}$.
Moreover, the subalgebra $\bigoplus_{n\ge0}\clG_{n,0} \subset \clG$ 
is equal to the non-super case \cite[\S7.3]{GR}.
For example, 
\begin{align}
 [\circ] \cdot [\circ] = 0%[\bl - \circ] \cdot [\circ - \circ - \circ] = 0
\end{align}
holds by \eqref{eq:chr:subsp}, but $[\bl|\bl] \ne 0$ in $\clG$.

To explain the comultiplication, we give some preliminary.
For a triple $G=(V,W,E)$ and a subset $V' \subset V$,
we define a new triple $\rst{G}{V'} = (V',W',E')$ by 
$W' \ceq W \cap V'$, $E' \ceq \{e\in E \mid e=\{v_1,v_2\} \subset V'\}$
and the order of the connected components $\rst{G}{V'}=G_1|\cdots|G_l$, 
$G_i=(V_i,W_i,E_i)$ being such that $\# W_1 \le \cdots \le \# W_l$
(such an order is not unique, but the equivalence class in $\clG$ is uniquely determined). 
Now, for each connected triple $G_{\con}$, we define 
\begin{align}
 \Delta([G_{\con}]) \ceq \frac{1}{2}\sum_{(V_1,V_2): V_1 \sqcup V_2 = V} 
 \left([\rst{G_{\con}}{V_1}] \otimes [\rst{G_{\con}}{V_2}] + 
       (-1)^{\#W_1\#W_2}[\rst{G_{\con}}{V_2}]\otimes[\rst{G_{\con}}{V_1}]\right).
\end{align}
For an arbitrary triple $G$, we decompose $G = G_1|\cdots|G_l$ into connected $G_i$'s and define 
\begin{align}
 \Delta([G]) \ceq \Delta([G_1])\cdots\Delta([G_l]).
\end{align}
For example, we have
\begin{align}
 \Delta([\bl-\bl]) &= [\bl-\bl]\otimes 1 + 2[\bl]\otimes[\bl] + 1\otimes[\bl-\bl],
\\
 \Delta([\bl-\circ]) &= 
 [\bl-\circ]\otimes 1 + [\bl]\otimes[\circ] + [\circ]\otimes[\bl] + 1\otimes[\bl-\circ],
\\
 \Delta([\circ-\circ]) &= [\circ-\circ]\otimes 1 + 1\otimes[\circ-\circ],
\\
 \Delta([\circ-\circ-\bl]) &= 
 [\circ-\circ-\bl]\otimes 1 + [\bl]\otimes[\circ-\circ] + 
 [\circ-\circ]\otimes[\bl] + 1\otimes[\circ-\circ-\bl], 
\\
 \Delta([\begin{tikzpicture}[scale=0.5,baseline=-3pt]
   \draw[-] (4.0,0) node {$\bl$} -- (4.9,0);
   \node at (5.0,0) {$\circ$};
   \draw[-] (5.15,0.05) -- (6,0.3) node {$\bl$} 
   (6, -0.3) node {$\bl$} -- (5.15,-0.05); \end{tikzpicture}]) &= 
 [\begin{tikzpicture}[scale=0.5,baseline=-3pt]
   \draw[-] (4.0,0) node {$\bl$} -- (4.9,0);
   \node at (5.0,0) {$\circ$};
   \draw[-] (5.15,0.05) -- (6,0.3) node {$\bl$} 
   (6, -0.3) node {$\bl$} -- (5.15,-0.05); \end{tikzpicture}] \otimes 1 +
 1\otimes[\begin{tikzpicture}[scale=0.5,baseline=-3pt]
   \draw[-] (4.0,0) node {$\bl$} -- (4.9,0);
   \node at (5.0,0) {$\circ$};
   \draw[-] (5.15,0.05) -- (6,0.3) node {$\bl$} 
   (6, -0.3) node {$\bl$} -- (5.15,-0.05); \end{tikzpicture}] + 
 3[\bl]\otimes[\bl-\circ-\bl] + 3[\bl-\circ-\bl]\otimes[\bl] \\
 & \quad +
 3[\bl-\circ]\otimes[\bl|\bl] +
 3[\bl|\bl]\otimes[\bl-\circ] + 
  [\circ]\otimes[\bl|\bl|\bl] + [\bl|\bl|\bl]\otimes[\circ].
\end{align}
The map $\Delta$ is coassociative and a superalgebra morphism, 
and we call it the comultiplication of $\clG$.
Moreover, it is cocommutative. 
Indeed, for any $G = (V, W, E)\in\clG$, we have
\begin{align}
  \tau\circ\Delta([G])
&=\frac{1}{2}\sum_{(V_1,V_2): V_1 \sqcup V_2 = V}
  \left(\tau([\rst{G}{V_1}] \otimes [\rst{G}{V_2}]) + 
        (-1)^{\#W_1\#W_2}\tau([\rst{G}{V_2}] \otimes [\rst{G}{V_1}])\right) \\
&=\frac{1}{2}\sum_{(V_1,V_2): V_1 \sqcup V_2 = V}
  \left((-1)^{\#W_1\#W_2}[\rst{G}{V_2}] \otimes [\rst{G}{V_1}] + 
        [\rst{G}{V_1}] \otimes [\rst{G}{V_2}]\right)
 = \Delta([G]),
\end{align}
where $\tau$ denotes the switching: for $G_i=(V_i,W_i,E_i)$, $i=1,2$, 
\begin{align}
 \tau\colon: \clG \otimes \clG \lto \clG \otimes \clG, \quad 
 [G_1] \otimes [G_2] \lmto (-1)^{\# W_1 \cdot \# W_2} [G_2] \otimes [G_1].
\end{align}

The restriction of the comultiplication $\Delta$ to 
the subspace $\bigoplus_{n\ge0}\clG_{n,0} \subset \clG$ \eqref{eq:chr:Gn0}
is equal to the non-super case \cite[Definition 7.3.1]{GR}.
Indeed, for a triple $G=(V,\emptyset,E)$, we have
\begin{align}
  \Delta([G]) 
&=\frac{1}{2}\sum_{(V_1,V_2): V_1 \sqcup V_2 = V} 
  \left([\rst{G}{V_1}] \otimes [\rst{G}{V_2}] + [\rst{G}{V_2}]\otimes[\rst{G}{V_1}]\right) \\
\label{eq:chr:D|}
&=\sum_{(V_1,V_2): V_1 \sqcup V_2 = V} [\rst{G}{V_1}] \otimes [\rst{G}{V_2}].
\end{align}

From the discussion so far, $\clG$ is a commutative and cocommutative bi(-super)algebra.
By \eqref{eq:chr:clG^*} and \eqref{eq:chr:clG^0}, it is connected graded.
Then, by \cite[Theorem 2.1]{FLP} and \cite[Proposition 1.4.16]{GR} 
(using the $\bbN$-grading), $\clG$ has an antipode (it is automatically unique).
The obtained Hopf superalgebra $\clG$ is connected graded 
by \eqref{eq:chr:clG^*} and \eqref{eq:chr:clG^0}.
We can also see that the subspace \eqref{eq:chr:Gn0} is a connected graded Hopf subalgebra of $\clG$,
and coincides with the \emph{chromatic Hopf algebra} in \cite[Definition 7.3.1]{GR}.

\begin{dfn}
The connected graded Hopf superalgebra $\clG$ is called the \emph{chromatic Hopf superalgebra}.
\end{dfn}

Moreover, the Hopf superalgebra $\clG$ has a supercharacter $\zeta_{\ch}$.
To specify it, it is enough to give the value on each connected triple $G_{\con}=(V,W,E)$.
We set 
\begin{align}\label{eq:chr:efz0}
 \zeta_{\ch}([G_{\con}]) \ceq 
 \begin{cases} 
  1   & \text{if $E = \emptyset$ and $\# W = 0$} \\ 
  \ve & \text{if $E = \emptyset$ and $\# W = 1$} \\
  0 & \text{otherwise}
 \end{cases}
\end{align}
For an arbitrary triple $G=G_1|\cdots|G_l$ with connected $G_i$'s, 
we set $\zeta_{\ch}([G]) \ceq \zeta_{\ch}([G_1])\cdots\zeta_{\ch}([G_l])$, and obtain a supercharacter 
\begin{align}\label{eq:chr:efz}
 \zeta_{\ch}\colon \clG \lto \bfk[\ve].   
\end{align}
If we restrict $\zeta_{\ch}$ to the Hopf subalgebra $\bigoplus_{n\ge0}\clG_{n,0} \subset \clG$,
then we recover the \emph{edge-free character} in \cite[Definition 7.3.16, (7.3.4)]{GR}. 

In summary, we have:

\begin{thm}\label{thm:chr:clG}
The chromatic Hopf superalgebra $\clG$ with the supercharacter $\zeta_{\ch}$ in \eqref{eq:chr:efz}
is a combinatorial Hopf superalgebra which is both commutative and cocommutative.
It contains the chromatic Hopf algebra $\bigoplus_{n\ge0}\clG_{n,0}$ in \cite[Definition 7.3.1]{GR}
as a Hopf subalgebra.  
\end{thm}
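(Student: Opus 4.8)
The plan is to assemble the structural facts verified in the discussion preceding the statement and to supply proofs for the points that genuinely require them: that the product, the coproduct, and the candidate supercharacter descend to the quotient $\clG=\clG'/\clC$, that $(\clG,\Delta)$ is a connected graded super-bialgebra, and that $\bigoplus_{n\ge0}\clG_{n,0}$ is a Hopf subalgebra matching \cite[Definition 7.3.1]{GR}. The organising observation I would record first is that $\clG$ is the \emph{free super-commutative} $\bfk$-algebra on the set of connected triples: concatenation is associative with unit $[\emptyset]$, and the relations defining $\clC$, namely $[G_1|G_2]=(-1)^{\#W_1\#W_2}[G_2|G_1]$ for connected $G_i$, are exactly the super-commutativity relations once one observes that $\abs{[G_i]}=\#W_i \bmod 2$. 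In characteristic $0$ this forces the vanishing of odd squares, giving e.g.\ $[\circ]^2=0$ as in \eqref{eq:chr:subsp}. The payoff is a universal property: a morphism of superalgebras from $\clG$ into any super-commutative target is freely and uniquely determined by a parity-homogeneous assignment of values to connected triples.

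With this in hand the descent statements are immediate. The product descends and is super-commutative by construction. For $V'\subset V$ the restricted triple $\rst{G}{V'}$ represents a well-defined class in $\clG$, the ordering ambiguity of its connected components being precisely absorbed by $\clC$; hence $\Delta([G_{\con}])$ is a well-defined element of $\clG\otimes\clG$, and the universal property extends $\Delta$ uniquely to a superalgebra morphism $\clG\to\clG\otimes\clG$ (the target being super-commutative). The same universal property makes $\zeta_{\ch}$ of \eqref{eq:chr:efz} a well-defined superalgebra morphism $\clG\to\bfk[\ve]$, since $\bfk[\ve]$ is super-commutative and the prescribed values \eqref{eq:chr:efz0} are parity-homogeneous; concretely, this is the identity $\zeta_{\ch}([G_1])\zeta_{\ch}([G_2])=(-1)^{\#W_1\#W_2}\zeta_{\ch}([G_2])\zeta_{\ch}([G_1])$ in $\bfk[\ve]$. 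Since $\#V_1+\#V_2=\#V$ and $\#W_1+\#W_2=\#W$ in every summand, both $\Delta$ and $\zeta_{\ch}$ preserve the $\bbN$-grading \eqref{eq:chr:clG^*} and parity; thus $\Delta$ is a graded even morphism and $\zeta_{\ch}$ is an even supercharacter in the sense of \cref{dfn:CHSA}.

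It then remains to check the super-bialgebra axioms. Counitality against the projection $\clG\srj\clG^0=\bfk$ of \eqref{eq:chr:clG^0} follows from the extreme terms $V_1=\emptyset$ and $V_2=\emptyset$, and cocommutativity has already been verified in the text via $\tau\circ\Delta=\Delta$. The substantive step is coassociativity: as $(\Delta\otimes\id)\circ\Delta$ and $(\id\otimes\Delta)\circ\Delta$ are both superalgebra morphisms $\clG\to\clG^{\otimes3}$, it suffices to compare them on a connected generator $[G_{\con}]$, where each expands as a signed sum over ordered partitions $V=V_1\sqcup V_2\sqcup V_3$ of the tensors $[\rst{G}{V_1}]\otimes[\rst{G}{V_2}]\otimes[\rst{G}{V_3}]$. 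What must be checked is that the Koszul signs produced by the super-braiding inside $\Delta\otimes\id$ and $\id\otimes\Delta$ combine with the symmetrisation signs $(-1)^{\#W_i\#W_j}$ to yield identical coefficients on the two sides; this sign bookkeeping, which is invisible in the non-super case of \cite[\S7.3]{GR}, is the step I expect to be the main obstacle. Granting it, $\clG$ is a connected graded, commutative and cocommutative super-bialgebra by \eqref{eq:chr:clG^*} and \eqref{eq:chr:clG^0}, hence carries a unique antipode by \cite[Theorem 2.1]{FLP} and \cite[Proposition 1.4.16]{GR}, and $(\clG,\zeta_{\ch})$ is a combinatorial Hopf superalgebra which is both commutative and cocommutative.

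Finally, for the Hopf subalgebra assertion I would restrict to the span $\bigoplus_{n\ge0}\clG_{n,0}$ of triples with $W=\emptyset$ of \eqref{eq:chr:Gn0}. This span is closed under the product (no white vertices arise), under $\Delta$ by the sign-free formula \eqref{eq:chr:D|}, and under the antipode by the grading, and it is connected; comparing its product, coproduct and antipode with \cite[Definition 7.3.1]{GR} and noting that $\zeta_{\ch}$ restricts to the edge-free character of \cite[Definition 7.3.16]{GR} identifies it with Stanley's chromatic Hopf algebra, completing the argument.
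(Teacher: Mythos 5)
Your proposal follows the same route as the paper, whose proof of this theorem consists precisely of the construction and verifications in the discussion preceding the statement: descent of the product, the coproduct and $\zeta_{\ch}$ to the quotient $\clG=\clG'/\clC$, the explicit check of cocommutativity via $\tau\circ\Delta=\Delta$, the antipode via \cite[Theorem 2.1]{FLP} and \cite[Proposition 1.4.16]{GR}, and the observation that restricting to $W=\emptyset$ recovers \cite[Definition 7.3.1]{GR}. The one step you explicitly defer --- the Koszul-sign verification of coassociativity --- is likewise asserted without proof in the paper, so your treatment is no less complete than the paper's own, and your framing of $\clG$ as the free supercommutative algebra on connected triples (with the attendant universal property for defining $\Delta$ and $\zeta_{\ch}$) is a useful explicit articulation of what the paper uses only implicitly.
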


Since $(\clG,\zeta_{\ch})$ is a cocommutative combinatorial Hopf superalgebra, by \cref{prp:CHSA:sL}, 
there is a graded even morphism 
\begin{align}
 \Psi_{\ch}\colon \clG \lto \sL   
\end{align}
of Hopf superalgebras.
Restricted to the subalgebra $\bigoplus_{n\ge0}\clG_{n,0}$, it recovers the map to 
the space of symmetric functions for the non-super case \cite[Definition 7.3.16]{GR}.
Since the image of this non-super map consists of 
\emph{Stanley's chromatic symmetric functions of graphs}, 
we may introduce:

\begin{dfn}\label{dfn:chr:PsiG}
For each triple $G=(V,W,E)$, we call the image $\Psi_{\ch}([G]) \in \sL$ 
the \emph{chromatic symmetric function in superspace} associated to $G$.
\end{dfn}

Let us give some examples of chromatic symmetric functions in superspace.
\begin{align}
&\Psi_{\ch}([\bl])=e_1, \quad 
 \Psi_{\ch}([\bl-\bl])=2e_2, 
\\
&\Psi_{\ch}([\circ])=\wt{e}_0, \quad 
 \Psi_{\ch}([\bl - \circ]) = \wt{e}_1, \quad
 \Psi_{\ch}([\circ - \circ - \bl]) = 0, 
\\ 
&\Psi_{\ch}([\begin{tikzpicture}[scale=0.5,baseline=-3pt]
   \draw[-] (4.0,0) node {$\bl$} -- (4.9,0);
   \node at (5.0,0) {$\circ$};
   \draw[-] (5.15,0.05) -- (6,0.3) node {$\bl$} 
   (6, -0.3) node {$\bl$} -- (5.15,-0.05); \end{tikzpicture}])
 = m_{(\dot{0},3)} + 6m_{(\dot{0},1,1,1)} + 3m_{(\dot{0},2,1)},
\end{align}
where we used the monomial symmetric function in superspace \eqref{eq:sL:mL} 
and the elementary symmetric functions in superspace \eqref{eq:sL:ee}.
From the definition \eqref{eq:chr:efz0}, \eqref{eq:chr:efz}, we have:

\begin{lem}\label{lem:chr:PsiC=0}
If a triple $G=G_1|\cdots|G_l$ has a connected component $G_k=(V_k,W_k,E_k)$ with $\#W_k>1$,
then $\Psi_{\ch}([G])=0$.
\end{lem}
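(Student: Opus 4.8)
The plan is to exploit the explicit expansion formula \eqref{eq:CHSA:Psi-sL} together with the multiplicative/comultiplicative structure of the supercharacter $\zeta_{\ch}$, reducing everything to the observation that $\zeta_{\ch}$ can never register more than one white vertex on a connected piece. Concretely, by \cref{prp:CHSA:sL} applied to the cocommutative combinatorial Hopf superalgebra $(\clG,\zeta_{\ch})$, we have for a homogeneous $[G]\in\clG^k$ the formula
\begin{align}\label{eq:plan:expand}
 \Psi_{\ch}([G]) = \sum_{\text{$\Lambda$: superpartitions of $k$}} \zeta_\Lambda([G])\, m_\Lambda,
\end{align}
so it suffices to show that every coefficient $\zeta_\Lambda([G])$ vanishes whenever $G$ has a connected component with more than one white vertex. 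Here $\zeta_\Lambda$ is the composite \eqref{eq:CHSA:zeta_alpha} associated to the dotted composition $(\dot{\Lambda}_1,\dotsc,\dot{\Lambda}_m,\Lambda_{m+1},\dotsc,\Lambda_l)$, namely $\zeta_{\ch}^{\otimes l}\circ\pi_\Lambda\circ\Delta^{(l-1)}$.

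First I would unwind the iterated comultiplication. Tracing through the definition of $\Delta$ on $\clG$, an $l$-fold coproduct $\Delta^{(l-1)}([G])$ expands as a sum of tensors $[\rst{G}{V^{(1)}}]\otimes\dotsb\otimes[\rst{G}{V^{(l)}}]$ indexed by ordered decompositions $V=V^{(1)}\sqcup\dotsb\sqcup V^{(l)}$ of the vertex set into (possibly empty) blocks. After applying the projection $\pi_\Lambda$ and then $\zeta_{\ch}^{\otimes l}$, each tensor factor $[\rst{G}{V^{(j)}}]$ gets evaluated by $\zeta_{\ch}$. The key point is multiplicativity: writing $\rst{G}{V^{(j)}}$ as a bar-product of its connected components, $\zeta_{\ch}([\rst{G}{V^{(j)}}])$ is the product of the values of $\zeta_{\ch}$ on those components, and by \eqref{eq:chr:efz0} each such value is nonzero only on an edge-free component with at most one white vertex.

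The decisive step is then a counting/parity argument on white vertices. Suppose $G$ has a connected component $G_k=(V_k,W_k,E_k)$ with $\#W_k>1$. For $\zeta_\Lambda([G])$ to contribute a nonzero term, every block $V^{(j)}$ must restrict to an edge-free subgraph in which each connected component of $\rst{G}{V^{(j)}}$ carries at most one white vertex. I would argue that, because the white vertices of $G_k$ all lie in a single connected component of $G$, the combinatorics of the restriction force at least one block to contain two white vertices lying in a common connected component of that block's induced subgraph, or else force some induced subgraph to retain an edge — either way a factor of $\zeta_{\ch}$ vanishes by the ``$\#W=1$'' and ``$E=\emptyset$'' constraints in \eqref{eq:chr:efz0}. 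Since $[G]=[G_1]\cdots[G_l]$ is a product over connected components and $\zeta_{\ch}$, $\Delta$, and hence $\Psi_{\ch}$ are all multiplicative, it in fact suffices to treat the case where $G=G_k$ is itself connected with $\#W=\#W_k>1$; then in every vertex-decomposition at least one block inherits two white vertices in the same connected piece, killing the corresponding $\zeta_{\ch}$-factor.

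The main obstacle I anticipate is the bookkeeping in this last step: one must verify that the two white vertices cannot always be separated into distinct connected components of the restrictions across all blocks simultaneously, which is really a statement about how connectivity of the ambient component constrains the induced subgraphs. The cleanest route is probably to package this multiplicatively — show directly that $\zeta_{\ch}\bigl([\rst{G_k}{V'}]\bigr)=0$ for the relevant restrictions unless $V'$ isolates at most one white vertex of $G_k$ — and then invoke the fact that a single connected component with two whites cannot be split into edge-free, at-most-one-white pieces by a single ordered partition without producing a vanishing factor. Once that local vanishing is established, \eqref{eq:plan:expand} gives $\Psi_{\ch}([G])=0$ immediately.
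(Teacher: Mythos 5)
The paper offers no written proof of \cref{lem:chr:PsiC=0} --- it is asserted to follow ``from the definition'' of $\zeta_{\ch}$ --- so the question is whether your argument stands on its own. It does not: the decisive step is false. You claim that for a connected $G$ with $\#W\ge 2$, every ordered decomposition $V=V^{(1)}\sqcup\dotsb\sqcup V^{(l)}$ occurring in $\Delta^{(l-1)}([G])$ must have some block that either retains an edge or puts two white vertices into one connected piece of the restriction. Take $G=\circ-\bl-\circ$ and the decomposition into singletons $\{w_1\},\{b\},\{w_2\}$: every restriction is a single vertex, hence edge-free with at most one white, and $\zeta_{\ch}^{\otimes 3}$ does not annihilate $[\circ]\otimes[\bl]\otimes[\circ]$. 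Two whites lying in a common connected component can \emph{always} be separated into distinct blocks (give every vertex its own block), so no termwise-vanishing argument of the kind you sketch can close the gap you yourself flag as ``the main obstacle.''

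What actually makes the lemma true is a sign cancellation, not individual vanishing. Since the sum defining $\Delta([G_{\con}])$ runs over \emph{ordered} pairs and is already symmetrized, one has $\Delta([G_{\con}])=\sum_{(V_1,V_2)}\tfrac{1}{2}\bigl(1+(-1)^{\#W_1\#W_2}\bigr)[\rst{G_{\con}}{V_1}]\otimes[\rst{G_{\con}}{V_2}]$, so every split of a connected component in which both sides receive an odd number of white vertices drops out of the coproduct entirely. Combined with the relation $[G_1|G_2]=(-1)^{\#W_1\#W_2}[G_2|G_1]$ in $\clG$ (which gives $[\circ|\circ]=0$ and kills edge-free restrictions keeping two whites of one component together) and the vanishing \eqref{eq:chr:efz0} of $\zeta_{\ch}$ on connected pieces with an edge or with more than one white, an induction on the number of whites in a component shows that every coefficient $\zeta_\Lambda([G])$ in \eqref{eq:CHSA:Psi-sL} vanishes. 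The paper's own examples confirm the mechanism: $\Delta([\circ-\circ])$ and $\Delta([\circ-\circ-\bl])$ contain no term separating the two white vertices. Your reduction to connected $G$ by multiplicativity and your appeal to \eqref{eq:CHSA:Psi-sL} are fine; the missing idea is to replace the termwise claim by this pairwise cancellation coming from the sign $(-1)^{\#W_1\#W_2}$.
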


Let us give another example of a chromatic symmetric function in superspace,
which can be regarded as a superspace analogue of \cite[Example 7.3.18]{GR}.
We will denote the complete graph on $l$ vertices by $K_l$.

\begin{prp}\label{prp:chr:Kn1}
For $n \ge 0$, let $K_{n+1,1}=(V,W,E)$ be a triple 
such that $(V,E)$ is the complete graph $K_{n+1}$ and $\#W=1$. 
Then, using the elementary symmetric function $\wt{e}_n$ \eqref{eq:sL:ee}, we have 
\begin{align}
 \Psi_{\ch}([K_{n+1,1}]) = n! \, \wt{e}_n.
\end{align}  
\end{prp}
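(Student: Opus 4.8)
The plan is to feed $[K_{n+1,1}]$ into the expansion formula \eqref{eq:CHSA:Psi-sL} of \cref{prp:CHSA:sL} and to show that the sum collapses to a single term. The triple $K_{n+1,1}$ has $n+1$ vertices, so $[K_{n+1,1}]\in\clG^{n+1}$, and it is odd because it carries a single white vertex. Since $\Psi_{\ch}$ is graded and even, only superpartitions $\Lambda=(\Lambda_1,\dots,\Lambda_m;\Lambda_{m+1},\dots,\Lambda_l)$ of $\bbN$-degree $n+1$ and odd fermionic degree $m$ can occur with nonzero coefficient $\zeta_\Lambda([K_{n+1,1}])$. For such a $\Lambda$, the coefficient is computed by the composite \eqref{eq:CHSA:zeta_alpha}: the iterated coproduct $\Delta^{(l-1)}$ expands $[K_{n+1,1}]$ as a sum, over ordered set partitions $V=V_1\sqcup\dotsb\sqcup V_l$ of its vertex set, of tensors $[\rst{K_{n+1,1}}{V_1}]\otimes\dotsb\otimes[\rst{K_{n+1,1}}{V_l}]$, after which $\zeta_{\ch}$ is applied to each restriction (the $m$ dotted slots receiving the odd factors).

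The decisive observation is the support of the supercharacter: by \eqref{eq:chr:efz0}, $\zeta_{\ch}([\rst{K_{n+1,1}}{V_i}])$ vanishes whenever $\rst{K_{n+1,1}}{V_i}$ has an edge, and an induced subgraph of the complete graph $K_{n+1}$ has an edge as soon as it has two vertices. Hence only ordered set partitions all of whose blocks are singletons (or empty) survive. Since $K_{n+1,1}$ has a single white vertex, at most one block carries it; this block is the unique odd factor, so the fermionic degree is forced to be $m=1$ and the corresponding dotted part is a lone white vertex, i.e.\ $\Lambda_1=0$. Every remaining nonempty block is a single black vertex, contributing a bosonic part equal to $1$. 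Matching $\bbN$-degrees then gives $\Lambda=(0;1^{n})$, and $m_{(0;1^n)}=\wt{e}_n$ by \eqref{eq:sL:ee}. Therefore $\Psi_{\ch}([K_{n+1,1}])=\zeta_{(0;1^n)}([K_{n+1,1}])\,\wt{e}_n$, and it remains to evaluate this one coefficient.

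The remaining step --- counting --- is where the only genuine bookkeeping occurs, and it is trivialized by the single white vertex. In any splitting $V=V'\sqcup V''$ at most one of $\#W'$ and $\#W''$ is nonzero, so the symmetrization weight $\tfrac12\bigl(1+(-1)^{\#W'\cdot\#W''}\bigr)$ appearing in the comultiplication of $\clG$ always equals $1$, and all braiding signs vanish because the lone odd generator is always isolated in its own block. Thus, on the component we care about, the iterated coproduct reduces to the coefficient-free formula \eqref{eq:chr:D|}, and $\zeta_{(0;1^n)}([K_{n+1,1}])$ simply counts the ordered set partitions into singletons in which the white vertex occupies the (first, fermionic) block and the $n$ black vertices fill the remaining $n$ blocks. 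There are exactly $n!$ such orderings, and each contributes the value $1$ (namely $\zeta_{\ch}([\circ])=\ve$ in the odd slot and $\zeta_{\ch}([\bl])=1$ in each of the $n$ even slots). Hence $\zeta_{(0;1^n)}([K_{n+1,1}])=n!$ and $\Psi_{\ch}([K_{n+1,1}])=n!\,\wt{e}_n$. I expect the main obstacle to be precisely this verification that the $\tfrac12$-symmetrization and the super-signs collapse to $1$; the one-white-vertex structure of $K_{n+1,1}$ is exactly the feature that makes this collapse happen and reduces the computation to a plain count of orderings.
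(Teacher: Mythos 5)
Your argument is correct and leads to the right coefficient, but it reaches it by a noticeably more direct route than the paper. Both proofs start from the expansion formula \eqref{eq:CHSA:Psi-sL} and both ultimately rest on the fact that the edge-free supercharacter \eqref{eq:chr:efz0} annihilates any tensor factor containing an edge. The paper, however, spends most of its proof establishing by induction on $m$ the closed formula \eqref{eq:chr:DKn1} for $\Delta^{(m)}([K_{n+1,1}])$ modulo the kernel of $\zeta_{\ch}^{\otimes(m+1)}$, and only then specializes to $m=n$ to get \eqref{eq:chr:Dn}. You bypass the induction entirely: since every restriction of a complete graph is connected, the iterated coproduct is a plain sum over ordered set partitions of $V$, the single white vertex forces every symmetrization weight $\tfrac{1}{2}\bigl(1+(-1)^{\#W_1\#W_2}\bigr)$ to equal $1$ and kills all Koszul signs, and the supercharacter then discards every block with two or more vertices; the projection $\pi_\alpha$ pins down the unique surviving superpartition $(0;1^n)$, counted with multiplicity $n!$. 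This is shorter and cleaner, and it is in effect the specialization to $K_{n+1,1}$ of the argument the paper gives afterwards for the general proper-coloring expansion in \cref{prp:chr:PsiG} --- from which the proposition also follows in one line, since proper colorings of $K_{n+1,1}$ are exactly the injective maps $V\to\{1,2,\dotsc\}$ and $\sum_f\bfx_f=n!\,m_{(0;1^n)}=n!\,\wt{e}_n$. Two cosmetic remarks: your placement of $[K_{n+1,1}]$ in $\clG^{n+1}$ is the one forced by gradedness of $\Psi_{\ch}$ (the bigrading of $\clG$ should be read with $n=\#(V\setminus W)$), and it would be worth stating explicitly that the multiplicative extension of $\Delta$ to disconnected triples never intervenes here precisely because induced subgraphs of $K_{n+1}$ are connected.
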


\begin{proof}
Let us once admit the equality
\begin{align}\label{eq:chr:Dn}
 \Delta^{(n)}([K_{n+1, 1}]) &= n!\bigl(
  [\circ]\otimes[\bl]^{\otimes n} + 
  [\bl]\otimes[\circ]\otimes[\bl]^{\otimes(n-1)} + \cdots + 
  [\bl]^{\otimes n}\otimes[\circ]\bigr) + O,
\end{align}
where the last term $O \in \clG^{\otimes n+1}$ satisfies $\zeta_{\ch}^{\otimes (n+1)}(O)=0$.
For a dotted partition $\alpha$, we denote by $\zeta_{\ch,\alpha}$ 
the composite \eqref{eq:CHSA:zeta_alpha} for the supercharacter $\zeta_{\ch}$.
Then, for each $i=1,\dotsc,n+1$, we have
\begin{align}
 \zeta_{\ch,\alpha}([K_{n+1,1}]) = 
 \begin{cases} n! & (\alpha=(1^{i-1},\dot{0},1^{n+1-i})) \\ 
               0  & (\text{otherwise}) \end{cases}.
\end{align}
Hence, by \eqref{eq:CHSA:Psi-sL} in \cref{prp:CHSA:sL}, we have 
\begin{align}
 \Psi_{\ch}([K_{n+1,1}]) = n! \, m_{(0;1^n)} = n! \, \wt{e}_n    
\end{align}

We will show \eqref{eq:chr:Dn} in a more general form. 
For any $2\le m\le n$, we have
\begin{align}
\begin{split}
\label{eq:chr:DKn1}
\Delta^{(m)}([K_{n+1,1}])
&=   n(n-1)\cdots(n-m+1)[K_{n-m+1,1}] \otimes [\bl]^{\otimes m} \\
%&\ + n(n-1)\cdots(n-m+2)[K_{n-m+1}] \otimes [\circ] \otimes [\bl]^{\otimes (m-1)} \\
%&\ + n(n-1)\cdots(n-m+2)[K_{n-m+1}] \otimes[\bl] \otimes[\circ] \otimes [\bl]^{\otimes (m-2)} \\
%&\ +\cdots \\
%&\ + n(n-1)\cdots(n-m+2)[K_{n-m+1}] \otimes [\bl]^{\otimes (m-1)} \otimes [\circ] \\
&\  + n(n-1)\cdots(n-m+2)[K_{n-m+1}] \otimes 
      \sum_{k=1}^m [\bl]^{\otimes (k-1)} \otimes [\circ] \otimes [\bl]^{\otimes (m-k)}
    + O
\end{split}
\end{align}
where $K_{l}=(V_l,\emptyset,E_l)$ is the triple 
corresponding to the ordinary complete graph on $l$ vertices, 
and the term $O$ satisfies $\zeta_{\ch}^{\otimes (m+1)}(O)=0$.
%``(others)'' consists of terms which 
Note that $O$ contains terms of the form $[G_0] \otimes [G_1] \otimes \cdots \otimes [G_m]$
such that for some $k=0,\dotsc,m$ we have $G_k=(V_k,W_k,E_k)$ with $\# E_k>0$. 
If $m=n$, then \eqref{eq:chr:DKn1} implies \eqref{eq:chr:Dn}.

As a preliminary of proving \eqref{eq:chr:DKn1}, 
we calculate $\Delta(K_l)$ using \eqref{eq:chr:D|}. We have
\begin{align}
 \Delta([K_l]) = [K_l] \otimes 1 + l [K_{l-1}] \otimes [\bl] + O,
\end{align}
where the term $O$ means the same as \eqref{eq:chr:DKn1}.
In particular, for $l>1$, we have 
\begin{align}\label{eq:chr:DKl}
 \Delta([K_l])=l [K_{l-1}] \otimes [\bl] + O.
\end{align}
Similarly, we have 
\begin{align}\label{eq:chr:DKl1}
\begin{split}
  \Delta([K_{l+1,1}]) 
&=\frac{1}{2}\sum_{\#V_1=1, \, \#V_2=l}
  \left([K_{l+1,1}|_{V_1}]\otimes[K_{l+1,1}|_{V_2}] + 
        [K_{l+1,1}|_{V_2}]\otimes[K_{l+1,1}|_{V_1}]\right) \\
&\ +\frac{1}{2}\sum_{\#V_1=l, \, \#V_2=1}
    \left([K_{l+1,1}|_{V_1}]\otimes[K_{l+1,1}|_{V_2}] + 
          [K_{l+1,1}|_{V_2}]\otimes[K_{l+1,1}|_{V_1}]\right) \\
&\ +\frac{1}{2}\sum_{\text{others}}
    \left([K_{l+1,1}|_{V_1}]\otimes[K_{l+1,1}|_{V_2}] + (-1)^{\#W_1\#W_2} 
          [K_{l+1,1}|_{V_2}]\otimes[K_{l+1,1}|_{V_1}]\right) \\
&=[K_l]\otimes[\circ] + l[K_{l,1}]\otimes[\bl] + O.
\end{split}
\end{align}

Now we prove \eqref{eq:chr:DKn1} by induction on $m$. 
When $m=2$, we have
\begin{align}
&(\Delta \otimes \id)\circ\Delta(K_{n+1,1}) 
 = \Delta(K_n)\otimes[\circ] + n\Delta(K_{n,1})\otimes[\bl] + (\text{others}) \\
&= n[K_{n-1}]\otimes[\bl]\otimes[\circ] + n[K_{n-1}]\otimes[\circ]\otimes[\bl] 
 + n(n-1)[K_{n-1, 1}]\otimes[\bl]\otimes[\bl] + O,
\end{align}
and \eqref{eq:chr:DKn1} actually holds.
Next, we assume \eqref{eq:chr:DKn1} for $2\le m\le n-1$. 
Then for $m+1$ we have
\begin{align}
&   \Delta^{(m+1)}([K_{n+1,1}]) 
 = (\Delta\otimes\id^{\otimes m})\circ\Delta^{(m)}(K_{n+1,1}) \\
&=  n(n-1)\cdots(n-m+1) \cdot \Delta([K_{n-m+1,1}]) \otimes [\bl]^{\otimes m} \\
%&\ +n(n-1)\cdots(n-m+2)\Delta([K_{n-m+1}])\otimes[\circ]\otimes [\bl]^{\otimes (m-1)} \\
%&\ +n(n-1)\cdots(n-m+2)\Delta([K_{n-m+1}])\otimes[\bl]\otimes[\circ]\otimes[\bl]^{\otimes(m-2)} \\
%&\ +\cdots\\
%&\ +n(n-1)\cdots(n-l+2)\Delta([K_{n-l+1}])\otimes[\bl]^{\otimes(l-1)}\otimes[\circ] 
&\ +n(n-1)\cdots(n-m+2) \cdot \Delta([K_{n-m+1}]) \otimes 
    \sum_{k=1}^m [\bl]^{\otimes (k-1)} \otimes [\circ] \otimes [\bl]^{\otimes (m-k)}
   + O \\
&= n(n-1)\cdots(n-m+1) \cdot \bigl([K_{n-m}]\otimes[\circ]+(n-m)[K_{n-m,1}]\otimes[\bl]\bigr)
   \otimes[\bl]^{\otimes m} \\
&\ +n(n-1)\cdots(n-m+2) \cdot (n-m+1) [K_{n-m}] \otimes [\bl] \otimes
    \sum_{k=1}^m [\bl]^{\otimes (k-1)} \otimes [\circ] \otimes [\bl]^{\otimes (m-k)}
   +O \\
&=  n(n-1)\cdots(n-m)[K_{n-m,1}] \otimes [\bl]^{\otimes(m+1)} \\
&\ +n(n-1)\cdots(n-m+1)[K_{n-m}] \otimes 
    \sum_{k=1}^{m+1} [\bl]^{\otimes (k-1)} \otimes [\circ] \otimes [\bl]^{\otimes (m+1-k)}
   + O,
\end{align}
where we used \eqref{eq:chr:DKl} and \eqref{eq:chr:DKl1} in the third equality.
Hence we have \eqref{eq:chr:DKn1} for $m+1$, and the proof is completed.
\end{proof}

We give a superspace extension of 
the expansion formula of chromatic symmetric functions \cite[Proposition 7.3.17]{GR}.
For a triple $G=(V,W,E)$, a \emph{proper coloring} is a map $f\colon V \to \{1,2,\dotsc\}$
such that $f(v) \ne f(v')$ for any edge $e=\{v,v'\} \in E$.

\begin{prp}\label{prp:chr:PsiG} 
For a triple $G=(V,W,E)$, 
the chromatic symmetric function in superspace $\Psi_{\ch}([G]) \in \sL$ is expressed as 
\begin{align}\label{eq:chr:xf}
 \Psi_{\ch}([G]) = \sum_f \bfx_f, \quad 
 \bfx_f \ceq \prod_{w \in W} \theta_{f(w)} \prod_{v\in V\backslash W}x_{f(v)}
\end{align}
where $f$ runs over all the proper colorings of $G$.
\end{prp}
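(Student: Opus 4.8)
The plan is to deduce the formula from the explicit description of the universal map. Since $\sL$ embeds in $\sQ$ as a sub-Hopf-superalgebra and $\zeta_S$ is the restriction of $\zeta_Q$, the uniqueness in \cref{thm:CHSA:sQ} identifies $\Psi_{\ch}$ with the composite $\clG \to \sL \hookrightarrow \sQ$, so it suffices to expand $\Psi_{\ch}([G])$ in the monomial quasi-symmetric basis. Using \eqref{eq:CHSA:Psi-sQ}, for $[G]\in\clG^k$ one has
\[
 \Psi_{\ch}([G]) = \sum_{\alpha}\zeta_{\ch,\alpha}([G])\,M_\alpha,
\]
the sum over dotted compositions $\alpha=(\alpha_1,\dots,\alpha_l)$, where $\zeta_{\ch,\alpha}$ is the composite \eqref{eq:CHSA:zeta_alpha} for the supercharacter $\zeta_{\ch}$ and $\eta(\alpha)=(\eta_1,\dots,\eta_l)$ is as in \eqref{eq:sQ:eta}. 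I would then match this expansion against $\sum_f\bfx_f$ monomial by monomial.

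The core computation is the evaluation of $\zeta_{\ch,\alpha}([G])$. Expanding the iterated coproduct, $\Delta^{(l-1)}([G])$ is a signed sum over ordered decompositions $V=V_1\sqcup\dots\sqcup V_l$ of the tensors $[G|_{V_1}]\otimes\dots\otimes[G|_{V_l}]$. Applying $\pi_\alpha$ and $\zeta_{\ch}^{\otimes l}$ and using the definition \eqref{eq:chr:efz0} of $\zeta_{\ch}$, a term survives only when each restriction $G|_{V_j}$ is edge-free (that is, $V_j$ is an independent set of $(V,E)$) and carries exactly $\eta_j$ white vertices and $v(\alpha_j)$ black vertices, where $v(\alpha_j)$ denotes the value of $\alpha_j$ (with its dot removed); each such factor then evaluates to $1$. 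An ordered decomposition of $V$ into independent sets $V_1,\dots,V_l$ is the same datum as a proper coloring $f\colon V\to\{1,\dots,l\}$ with color classes $f^{-1}(j)=V_j$, so $\zeta_{\ch,\alpha}([G])$ counts the proper colorings whose $j$-th color class has $\eta_j$ white and $v(\alpha_j)$ black vertices.

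With this in hand the reassembly is bookkeeping: a proper coloring $f$ produces the monomial $\bfx_f=\prod_{w\in W}\theta_{f(w)}\prod_{v\in V\setminus W}x_{f(v)}$, which is exactly one of the monomials appearing in $M_{\alpha(f)}$, where $\alpha(f)$ records the contents of the color classes in increasing order of color. Grouping proper colorings by their shape $\alpha(f)$ and using that $M_\alpha$ is the sum over all color choices of a fixed shape, I would identify $\sum_\alpha\zeta_{\ch,\alpha}([G])M_\alpha$ with $\sum_f\bfx_f$; membership of the result in $\sL\subset\sQ$ then follows, consistently with \cref{lem:chr:PsiC=0}.

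The main obstacle is the fermionic sign and coefficient bookkeeping in the iterated coproduct. The defining formula for $\Delta$ on $\clG$ carries both the factor $\tfrac12$ and the signs $(-1)^{\#W_i\#W_j}$, so one must show that, after the symmetrization built into $\Delta^{(l-1)}$, each admissible ordered decomposition contributes with precisely the sign dictated by the anticommutation $\theta_a\theta_b=-\theta_b\theta_a$ in $\bfx_f$. This is the delicate step, since it is exactly this matching of signs that produces the cancellations governing \cref{lem:chr:PsiC=0} and that is needed to conclude $\sum_\alpha\zeta_{\ch,\alpha}([G])M_\alpha=\sum_f\bfx_f$. I would handle it by first treating a single connected $G$, where the symmetrization collapses to a clean signed sum over ordered independent-set decompositions, and then extending multiplicatively to arbitrary $G$ via $\Delta([G])=\Delta([G_1])\cdots\Delta([G_l])$.
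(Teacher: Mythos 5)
Your proposal is correct and follows essentially the same route as the paper's proof: expand $\Psi_{\ch}([G])$ via \eqref{eq:CHSA:Psi-sQ}, interpret $\zeta_{\ch,\alpha}([G])$ through the iterated coproduct as a count of ordered decompositions of $V$ into independent sets with prescribed white/black content (equivalently, proper colorings of shape $\alpha$), and reassemble against the $M_\alpha$. The sign and $\tfrac{1}{2^{l-1}}$ bookkeeping you flag as the delicate step is exactly the point the paper also treats (somewhat informally, via the $2^{l-1}$ symmetrizing permutations), so your plan matches the published argument in both structure and level of detail.
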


\begin{proof}
For a triple $G=(V,W,E)$ with $[G] \in \clG^k$, the formula \eqref{eq:CHSA:Psi-sQ} says 
\begin{align}
 \Psi_{\ch}([G]) = \sum_\alpha \zeta_{\ch,\alpha}([G]) M_\alpha,
\end{align}
where $\alpha=(\alpha_1,\dotsc,\alpha_l)$ runs over dotted compositions of $k$.

To interpret the coefficient $\zeta_{\ch,\alpha}([G])$, we note that, 
for $l \ge 2$, the image of $[G] \in \clG$ under 
the iterated comultiplication $\Delta^{(l-1)}\colon \clG \to \clG^{\otimes l}$ is expressed as 
\begin{align}
 \Delta^{(l-1)}([G]) = \frac{1}{2^{l-1}} 
 \sum_{\substack{(V_1,\dotsc,V_{l}): \\ V_1 \sqcup\cdots\sqcup V_l = V}} \sum_{\sigma} \pm
 [\rst{G}{V_{\sigma(1)}}] \otimes [\rst{G}{V_{\sigma(2)}}] 
 \otimes \cdots \otimes [\rst{G}{V_{\sigma(l)}}],
%%\\
%%&=\frac{1}{2^{l-1}}
%%  \sum_{\substack{V_1,\dotsc,V_{l} \\ V_1 \sqcup\cdots\sqcup V_{l} = V}}
%%  \left([\rst{G}{V_1}]\otimes[\rst{G}{V_2}]\otimes\cdots\otimes[\rst{G}{V_{l}}] + 
%%  (-1)^{\#W_1\#W_2}[\rst{G}{V_2}]\otimes[\rst{G}{V_1}]\otimes\cdots\otimes[\rst{G}{V_{l}}]\right)
%%\\
%%&\ +\frac{1}{2^{l-1}}
%% \sum_{\substack{V_1,\dotsc,V_{l} \\ V_1 \sqcup\cdots\sqcup V_{l} = V}}
%% c_l\left([\rst{G}{V_1}]\otimes[\rst{G}{V_2}]\otimes\cdots\otimes[\rst{G}{V_{l}}] + 
%% (-1)^{\#W_1\#W_2}[\rst{G}{V_2}]\otimes[\rst{G}{V_1}]\otimes\cdots\otimes[\rst{G}{V_{l}}]\right),
\end{align}
where $\pm$ denotes some sign, 
and $\sigma$ runs over some permutations of $(1,\dotsc,l)$.
Note that the number of such $\sigma$ is $2^{l-1}$.
%$c_l \ceq \sum_{k=2}^{l}(-1)^{(\sum_{i=1}^{k}\#W_i)\#W_{k+1}+(\sum_{i=1}^{k-1}\#W_i)\#W_{k}
%+ \cdots + (\#W_1+\#W_2)\#W_3}$. 
The supercharacter $\zeta_{\ch}^{\otimes l}$ sends each summand to $1$ or $0$ depending on 
whether each $E_i \ceq \{\{v, v'\}\in E \mid v, v'\in V_{\sigma(i)}\}$ is empty or not, i.e.,
whether the assignment of color $i$ to the vertices in $V_{\sigma(i)}$ gives a proper coloring of $G$.
%In particular, $\zeta_{\ch}([\rst{G}{V_{\sigma(i)}}]) = 1$ means that 
%vertices with the same color do not adjacent. 
%On the other hand, $\zeta_{\ch}([\rst{G}{V_i}]) = 0$ means that there are some vertices 
%which adjacent and have the same color. 

Hence, the coefficient $\zeta_{\text{ch}, \alpha}([G])$ of $M_\alpha$ in $\Psi_{\ch}([G])$ 
counts the proper colorings $f$ such that $\# f^{-1}(i)=\alpha_i+\eta_i$ 
and $\# f^{-1}(i) \cap W = \eta_i$ for each color $i \in \{1,2,\dotsc\}$,
where $(\eta_1,\dotsc,\eta_l)=\eta(\alpha)$ is given by \eqref{eq:sQ:eta}.
In other words, for a color $i$ such that $\alpha_i=\dot{s}$ ($s\ge0$), 
the counted $f$ satisfies $\# f^{-1}(i)=s+1$ and $\# f^{-1}(i) \cap W =1$,
and for $i$ such that $\alpha_i=r$ ($r\ge1$), 
$f$ satisfies $\# f^{-1}(i)=r$ and $\# f^{-1}(i) \cap W =0$.
Since $M_\alpha=\sum_{f}\bfx_f$ with $f$ running over such proper colorings, 
we have the desired \eqref{eq:chr:xf}.
\end{proof}

At the end of this note, 
we discuss the expansion of chromatic symmetric functions in superspace. 
We begin by recalling one of our examples of a chromatic symmetric function in superspace:
\begin{align}
 \Psi_{\ch}([\begin{tikzpicture}[scale=0.5,baseline=-3pt]
 \draw[-] (4.0,0) node {$\bl$} -- (4.9,0);
 \node at (5.0,0) {$\circ$};
 \draw[-] (5.15,0.05) -- (6,0.3) node {$\bl$} 
 (6, -0.3) node {$\bl$} -- (5.15,-0.05); \end{tikzpicture}])
 = m_{(\dot{0},3)} + 6m_{(\dot{0},1,1,1)} + 3m_{(\dot{0},2,1)}.
\end{align}
By direct calculation, we can expand this polynomial in superspace 
in terms of elementary symmetric functions in superspace:
\begin{align}
 \Psi_{\ch}([\begin{tikzpicture}[scale=0.5,baseline=-3pt]
 \draw[-] (4.0,0) node {$\bl$} -- (4.9,0);
 \node at (5.0,0) {$\circ$};
 \draw[-] (5.15,0.05) -- (6,0.3) node {$\bl$} 
 (6, -0.3) node {$\bl$} -- (5.15,-0.05); \end{tikzpicture}])
 = \wt{e_1}e_1^2 + 6\wt{e_3}+3\wt{e_2}e_1-\wt{e_1}e_2-\wt{e_0}e_1e_2-3\wt{e_0}e_3.
\end{align}
In general, as illustrated by the example above, chromatic symmetric functions in superspace 
cannot be expanded by elementary symmetric functions in superspace with positive coefficients. 
In the non-super case, whether a symmetric function can be expanded in terms of 
elementary symmetric functions is related to the Stanley--Stembridge conjecture \cite{SS}. 
This conjecture was recently proved by Hikita \cite{H} for all unit interval graphs 
by introducing $(q, t)$-chromatic symmetric functions. 
In the super case, we should consider 
the $e$-positivity of chromatic symmetric functions in superspace.
Since $(q,t)$-chromatic symmetric functions provide a new interpretation of 
chromatic symmetric functions in terms of the affine Hecke algebras of type A, 
the $e$-positivity of chromatic symmetric functions in superspace may play a key role 
in developing a super analogue of the representation theory of the affine Hecke algebras.

%%%%%%%%%%%%%%%%%%%%%%%%%%%%%%%%%%%%%%%%%%%%%%%%%%%%%%%%%%%%%%%%%%%%%%%%%%%%%%%%%%%%%%%%%%%%%%%%%%%
\begin{Ack}
The authors would like to thank Professor Hiroaki Kanno 
for providing the impetus for this research, and 
Professor Jaeseong Oh for stimulating comments.
They would also like to thank the referees for valuable comments and 
suggestions on improvements.
This work was financially supported by JST SPRING, Grant Number JPMJSP2125. 
M.H.\ and R.Y.\ would like to take this opportunity to thank the 
“THERS Make New Standards Program for the Next Generation Researchers.”
\end{Ack}

%%%%%%%%%%%%%%%%%%%%%%%%%%%%%%%%%%%%%%%%%%%%%%%%%%%%%%%%%%%%%%%%%%%%%%%%%%%%%%%%%%%%%%%%%%%%%%%%%%%
%%%%%%%%%%%%%%%%%%%%%%%%%%%%%%%%%%%%%%%%%%%%%%%%%%%%%%%%%%%%%%%%%%%%%%%%%%%%%%%%%%%%%%%%%%%%%%%%%%%

\end{document}